\newcommand{\rouge}[1]{\textcolor{red}{#1}}
\renewcommand{\P}{{\mathds{P}}}
\newcommand{\E}{{\mathds{E}}}
\newcommand{\diag}{\mathrm{diag}}
\newcommand{\ud}{\, \mathrm{d}}  
\newcommand{\ue}{\mathrm{e}}  
\newcommand{\vect}[1]{\boldsymbol #1}
\newcommand{\vone}{\vect 1}
\newcommand{\vzero}{\vect 0}
\newcommand{\epsi}{\varepsilon}
\newcommand{\vligne}[1]{\begin{bmatrix} #1 \end{bmatrix}}
\newcommand{\up}{\bs{p}_{+1}^{(b)}} 
\newcommand{\down}{\bs{p}_{-1}^{(0)}} 
\newcommand{\Mp}{\ue^{\Theta^{-1}\Psi_1^*b}}
\newcommand{\Mm}{\ue^{\Theta^{-1}\Psi_1b}}
\newtheorem{defn}{Definition}[section]
\newtheorem{lem}[defn]{Lemma}
\newtheorem{theo}[defn]{Theorem}
\newtheorem{cor}[defn]{Corollary}
\newtheorem{rem}[defn]{Remark}
\newtheorem{ass}[defn]{Assumption}
\newcommand{\debproof}{\begin{proof}}
\newcommand{\finproof}{\end{proof}}
\newcommand{\intrusion}[1]{\begin{list}{}%
{\leftmargin\parindent\rightmargin0pt\listparindent0pt\parsep1ex%
\labelwidth0pt\labelsep0pt}\item[]\small\sf$\spadesuit$~#1 \end{list}}
\newcommand{\bs}{\boldsymbol}
\newcommand{\barrer}[1]{\rouge{\sout{#1}}}
\renewcommand{\barrer}[1]{}
\begin{document}

 \begin{frontmatter}

 \title{Fluid approach to two-sided Markov-modulated Brownian motion}
 \runtitle{Stationary distribution of two-sided MMBM}


 \author{\fnms{Guy} \snm{Latouche}\ead[label=e1]{guy.latouche@ulb.ac.be}
 }
 \address{Universit\'{e} libre de Bruxelles \\ 
 D\'{e}partement d'Informatique \\ \printead{e1}}
 \author{\fnms{Giang T.} \snm{Nguyen}\corref{} \ead[label=e2]{giang.nguyen@adelaide.edu.au}\thanksref{t1}}
 \address{The University of Adelaide \\ School of Mathematical Sciences \\ \printead{e2}}
 \thankstext{t1}{The authors thank the Minist\`ere de la
 Communaut\'e fran\c{c}aise de Belgique for funding this research
 through the ARC grant AUWB-08/13--ULB~5.  They also acknowledge the
 financial support of the Australian Research Council through the
 Discovery Grant DP110101663.}

 \vspace*{0.2cm}  

 \runauthor{G. Latouche and G. T. Nguyen}

 \begin{abstract}
   We extend to Markov-modulated Brownian motion (MMBM) the renewal
   approach which has been successfully applied to the analysis of
   Markov-modulated fluid models.  It has recently been shown that
   MMBM may be expressed as the limit of a parameterized family of
   Markov-modulated fluid models. We prove that the weak convergence also holds for systems with two reflecting boundaries, one at
   zero and one at $b >0$, and that the stationary distributions of the approximating fluid models converge to the stationary distribution of the two-sided reflected MMBM. Thus, we obtain a new representation for the
   stationary distribution, effectively separating the limiting
   behaviour of the process at the boundaries from its behaviour in
   the interior of $(0,b)$.
 \end{abstract}

 \begin{keyword}[class=AMS]
 \kwd{60J25, 60J65, 60B10}
 \end{keyword}

 \begin{keyword}
 \kwd{Markov-modulated linear fluid models, reflected two-sided Markov-modulated Brownian motion, weak convergence, stationary distribution}
 \end{keyword}

 \end{frontmatter}

\section{Introduction} 
	\label{sec:intro}
Since the beginning of the last century, Brownian motions have been an
important class of stochastic processes, with applications in
increasingly diverse areas such as biology, queueing theory, physics,
environmental modeling, and mathematical finance. Naturally, the
effectiveness of Brownian motions as modeling tools has lead to their
many generalizations, one of which are the class of {Markov-modulated
  Brownian motions} (MMBMs), where the drift and variance are driven
by an independent, continuous-time finite-state Markov chain. Thus,
Markov-modulated Brownian motions are not only mathematically
fascinating but also applicable for a wide 
variety of real-life applications. 

Traditionally, the stationary distribution of MMBMs has been analyzed mainly via the theory of generators of Markov processes 
{in  Rogers}~\cite{roger94}, partial differential equations 
{in Karandikar and Kulkarni}~\cite{kk95}, the theory of martingales 
{in  Asmussen}~\cite{asmussen95} {and Asmussen and Kella}
\cite{ao00}, and generalized Jordan chains in D'Auria \emph{et al.}~\cite{dikm12}. Recently, there
appeared a fifth approach, via an approximation by Markov-modulated fluid
flows (MMFFs). First, Ramaswami~\cite{ram11} constructed a
parameterized family of MMFFs that converge weakly to a standard
Brownian motion. Then, Latouche and Nguyen~\cite{ln13} generalized
this construction to approximate MMBMs, with and without a reflecting
boundary at level zero{;} the authors showed that the
stationary distributions of approximating fluid processes converge to
the stationary distribution of the limiting reflected one-sided MMBM,
assuming that the latter process is positive recurrent.
 

Here, we apply the fluid-based approximation approach to carry out the
stationary analysis for reflected two-sided Markov-modulated Brownian
motions, with boundaries at zero and at $b > 0$. This provides us with
a new representation for the stationary distribution, obtained via a
proof significantly different from the ones that rely on the theory of
generators~\cite{roger94} or on time-reversal
arguments~\cite{ivanovs10}. The new representation indicates that the
stationary density is a product of two terms, one of which is about
the limiting behaviour at the interior $(0,b)$ and the other is about
the limiting behaviour at the boundaries. This opens the way to the analysis of more
complex models.

In Section~\ref{sec:prel}, we formally define Markov-modulated
Brownian motions and Markov-modulated fluid flows, and describe the
fluid-based approximation in~\cite{ln13}. We show in
Section~\ref{sec:2s} that the stationary distributions of {the approximating processes} converge to the stationary distribution of the MMBM, and we determine
in Section~\ref{sec:form} the closed-form expression for the limiting
stationary distribution. In Section~\ref{sec:comp}, we draw comparison
between our representation and the ones derived in~\cite{roger94, ivanovs10}.

\section{Preliminaries} 
	\label{sec:prel} 
	
\subsection{Markov-modulated models} 
	
A Markov-modulated Brownian motion $\mathcal{Y} = \{Y(t), \kappa(t): t \geq 0\}$ is a continuous-time two-dimensional Markov-process, where the \emph{phase} $\kappa(t)$ is a Markov-chain on a finite state space $\mathcal{M} = \{1, \ldots, m\}$, the \emph{level} $Y(t) \in (-\infty, \infty)$ is a Brownian motion with drift $\mu_i$ and variance $\sigma^2_i$ whenever $\kappa(t) = i \in \mathcal{M}$. We denote by $D$ the drift matrix $\diag(\mu_1, \ldots, \mu_m)$, by $V$ the variance matrix $\diag(\sigma^2_1, \ldots, \sigma^2_m)$, and by $Q$ the generator of $\kappa(t)$, which we assume to be irreducible. We also assume the following. 

\begin{ass} 
	\label{ass:mmbm}
The initial level Y(0) is zero, the initial phase $\kappa(0)$ has the stationary distribution $\bs{\alpha}$ (that is, $\bs{\alpha}Q = \bs{0}$, $\bs{\alpha}\bs{1} = 1$), and $\sigma_i^2 \neq 0$ for all $i \in \mathcal{M}$. 
\end{ass}

Markov-modulated Brownian motions are sometimes referred to as \emph{second-order fluid models}; similarly, Markov-modulated fluid flows are also known as \emph{first-order fluid models}. A Markov-modulated fluid flow $\mathcal{L} = \{L(t), \varphi(t): t \geq 0\}$ is a continuous-time two-dimensional Markov process, where the phase $\varphi(t)$ is a Markov chain on a finite state space $\mathcal{S}$, the level $L(t) \in (-\infty, \infty)$ is independent of $\varphi(t)$ and 
%
{
$\frac{\ud}{\ud t} L(t) = c_i$ {if } $\varphi(t) = i \in \mathcal{S}$. }

\subsection{A fluid-based approximation}

Given {the} Markov-modulated Brownian motion $\mathcal{Y} =
\{Y(t),\kappa(t)\}$ {defined above,} and given Assumption~\ref{ass:mmbm}, we construct a parameterised family of fluid flows $\{L_{\lambda}(t), \beta_{\lambda}(t), \varphi_{\lambda}(t): t \geq 0\}$ as follows. The phase process here is a two-dimensional Markov chain $\{\beta_{\lambda}(t), \varphi_{\lambda}(t)\}$ on state space $\mathcal{S} = \{(k,i): k \in \{1, 2\} \mbox{ and } i \in \mathcal{M}\}$, with generator 
\begin{align*}
T_{\lambda} =  
\vligne{
Q - \lambda I & \lambda I \\ 
\lambda I & Q - \lambda I
},
\end{align*} 
where {the} components of $T_{\lambda}$ are indexed according to lexicographic ordering of $\{1,2\} \times \mathcal{M}$, the parameter $\lambda$ is positive, and $I$ denotes the identity matrix of appropriate dimensions. Whenever ambiguity might arise, we write $I_n$ to denote the $n \times n$ identity matrix. The rate matrix $C_{\lambda} = \diag(c_{k,i})_{k \in \{1,2\}, i \in \mathcal{M}}$ for the level $L_{\lambda}(t)$ is given by 
\begin{align*} 
C_{\lambda} = 
\vligne{
D + \sqrt{\lambda}\Theta & \\ 
& D - \sqrt{\lambda}\Theta 
}, \quad \mbox{ where } \Theta = \sqrt{V}. 
\end{align*} 
As $Q$ is by assumption irreducible, so is $T_{\lambda}$, and for sufficiently large values of~$\lambda$ the matrix $C_{\lambda}$ is invertible. 

\begin{ass}
	\label{ass:fluid} 
The initial level $L_{\lambda}(0)$ is zero, $\beta_{\lambda}(0)$ has the stationary distribution $\bs{\gamma} = (\sfrac{1}{2}\;, \sfrac{1}{2})$, and $\varphi_{\lambda}(0)$ has the stationary distribution $\bs{\alpha}$.  
\end{ass} 

{Informally,} we duplicate  {for the constructed fluid
  model} the state space~$\mathcal{M}$ of the phase process
$\kappa(t)$, and keep track of each copy via $\beta_{\lambda}(t) \in
\{1,2\}$. The process $\{\beta_{\lambda}(t), \kappa_{\lambda}(t)\}$
switches from a phase in a copy (say, $(1,i)$) to the corresponding
phase in the other copy (which would be $(2,i)$) with rate $\lambda$;
the dynamic between phases in a copy is the same as that of the phase
process $\kappa(t)$, governed by $Q$. As $\lambda$ tends to infinity,
$(\beta_{\lambda}(t), \kappa_{\lambda}(t))$ switches between two
corresponding phases faster and faster, effectively scaling time. The
matrix $C_{\lambda}$ implies, that for the duplicated phases we modify
the original drifts by an increment of $\sqrt{\lambda} \Theta$ for one
copy and by a decrement of the same quantity for the other copy. As
$\lambda$ tends to infinity, so does the difference between two drifts
of corresponding phases, effectively scaling space. 
The {combined} scaling of space and of time is the underlying reason for the convergence of parameterised fluid flows to the Markov-modulated Brownian motion $\{Y(t), \kappa(t)\}$. This weak convergence, proved in~\cite{ln13}, is formally stated in the following theorem. 
\begin{theo}[\cite{ln13}]
	\label{theo:conv} 
	Given Assumptions~\ref{ass:mmbm} and~\ref{ass:fluid}, the processes $\{L_{\lambda}(t), \varphi_{\lambda}(t): t \geq 0\}$ converge weakly to $\{Y(t), \kappa(t): t \geq 0\}$, as $\lambda \rightarrow \infty$. 
\end{theo}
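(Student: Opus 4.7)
The plan is to establish the weak convergence by a perturbed-test-function (corrector) argument, which averages out the rate-$\lambda$ switching between copies and then identifies the limit via the MMBM martingale problem. A first observation is that the block structure of $T_\lambda$ --- diagonal blocks $Q-\lambda I$ and off-diagonal blocks $\lambda I$ --- implies that the marginal dynamics of $\varphi_\lambda(t)$ are governed by $Q$ alone, so I would work under the coupling $\varphi_\lambda \equiv \kappa$ throughout; under this coupling both initial conditions agree with $\bs{\alpha}$, in accordance with Assumptions~\ref{ass:mmbm} and~\ref{ass:fluid}.

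Writing $\epsilon_1 = +1$, $\epsilon_2 = -1$, and denoting by $k'$ the element of $\{1,2\}$ distinct from $k$, the generator of the joint Markov process $(L_\lambda, \beta_\lambda, \varphi_\lambda)$ acts on $C^2$ test functions $f(x,k,i)$ as
\begin{align*}
\mathcal{A}_\lambda f(x,k,i) = \bigl(\mu_i + \epsilon_k \sqrt{\lambda}\,\sigma_i\bigr)\partial_x f(x,k,i) + \sum_{j} Q_{ij}\, f(x,k,j) + \lambda\bigl(f(x,k',i) - f(x,k,i)\bigr),
\end{align*}
whereas the target MMBM generator is $\mathcal{B}g(x,i) = \mu_i\,\partial_x g(x,i) + \tfrac{1}{2}\sigma_i^2\,\partial_{xx} g(x,i) + \sum_j Q_{ij}\, g(x,j)$. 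For a smooth compactly supported test function $g$ on $\R\times\mathcal{M}$, I would introduce the corrector
\begin{align*}
f_\lambda(x,k,i) = g(x,i) + \frac{\epsilon_k \sigma_i}{2\sqrt{\lambda}}\,\partial_x g(x,i),
\end{align*}
and verify by a short computation that the $O(\sqrt{\lambda})$ singular contributions cancel exactly, leaving $\mathcal{A}_\lambda f_\lambda = \mathcal{B}g + O(1/\sqrt{\lambda})$, uniformly on compacts.

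Given this, the Dynkin martingale $f_\lambda(L_\lambda(t),\beta_\lambda(t),\varphi_\lambda(t)) - \int_0^t \mathcal{A}_\lambda f_\lambda(L_\lambda(s),\beta_\lambda(s),\varphi_\lambda(s))\,ds$ has coefficients converging uniformly, and $f_\lambda \to g$ uniformly; along any weakly convergent subsequence the limit of $(L_\lambda,\varphi_\lambda)$ therefore solves the martingale problem for $\mathcal{B}$ with initial condition $(0,\bs{\alpha})$. Well-posedness of this martingale problem, which characterises the MMBM $(Y,\kappa)$ uniquely via $(Q,D,V)$, then identifies the limit as $(Y,\kappa)$.

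The hard step is tightness of $\{L_\lambda\}$ in $D[0,\infty)$, because the individual drifts $\mu_i \pm \sqrt{\lambda}\sigma_i$ grow without bound in $\lambda$ and the corresponding paths oscillate violently. I would address this by reapplying the corrector construction with $g(x)=x^2$ (truncated on compacts) so that, after subtracting the $O(1/\sqrt{\lambda})$ corrector, the bad $\sqrt{\lambda}$ terms cancel and one obtains uniform-in-$\lambda$ bounds on $\E[L_\lambda(t)^2]$ and on conditional second moments $\E\bigl[(L_\lambda(\tau+\delta)-L_\lambda(\tau))^2\mid \mathcal{F}_\tau\bigr]$ for bounded stopping times $\tau$; Aldous' criterion then yields tightness of the level component, while tightness of the phase component is automatic from the coupling $\varphi_\lambda\equiv\kappa$.
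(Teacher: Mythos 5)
Your argument is correct in outline, and the corrector computation at its heart checks out: with $f_\lambda(x,k,i)=g(x,i)+\epsilon_k\sigma_i(2\sqrt{\lambda})^{-1}\partial_xg(x,i)$ the switching term $\lambda\bigl(f_\lambda(x,k',i)-f_\lambda(x,k,i)\bigr)=-\epsilon_k\sqrt{\lambda}\,\sigma_i\,\partial_xg(x,i)$ cancels the singular part of the drift exactly, while the product of the $\epsilon_k\sqrt{\lambda}\,\sigma_i$ drift with the $O(\lambda^{-1/2})$ corrector generates $\tfrac12\sigma_i^2\partial_{xx}g$, as you claim. It is, however, a genuinely different route from the one behind the statement: the theorem is quoted from \cite{ln13}, where (as this paper's own use of those results in the proofs of Theorems~\ref{theo:conv-newa} and later sections makes clear) finite-dimensional convergence is obtained by computing the Laplace matrix exponent $\Delta_\lambda(s)$ of the fluid explicitly and proving $\ue^{\Delta_\lambda(s)t}\to\ue^{\Delta_Y(s)t}$, and tightness is verified directly through the Billingsley--Whitt conditions rather than through Aldous' criterion. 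Your perturbed-test-function/martingale-problem approach is softer and more portable --- it avoids transforms entirely and would extend to level-dependent coefficients --- but it requires two ingredients you assert rather than prove, namely well-posedness of the MMBM martingale problem and the de-truncation/localization step in the $g(x)=x^2$ moment bound, and it yields only qualitative convergence. The transform route of \cite{ln13} is less elegant but produces the quantitative expansions in $\varepsilon=\lambda^{-1/2}$ (such as $\Psi_\varepsilon=I+\varepsilon\Psi_1+O(\varepsilon^2)$ and the expressions for $K_0$, $K_0^*$) on which the entire stationary analysis of Sections~\ref{sec:2s}--\ref{sec:form} depends, so it is not merely a proof device here but supplies the objects appearing in the final answer.
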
 
\subsection{Reflected one-sided processes} 
	
We can construct a similar approximation by fluid flows for MMBMs with a reflecting boundary at level zero. Denote by $\widehat{\mathcal{Y}} = \{\widehat{Y}(t), \kappa(t): t \geq 0\}$ the reflected one-sided MMBM associated with $\{Y(t), \kappa(t): t \geq 0\}$, where 
\begin{align*}
\widehat{Y}(t) = Y(t) - \inf_{0 \leq v \leq t}Y(v),  
\end{align*}
and by $\widehat{\mathcal{L}}_{\lambda} = \{\widehat{L}_{\lambda}(t), \beta_{\lambda}(t), \varphi_{\lambda}(t): t \geq 0\}$ the resulting fluid flow if we introduce into $\{L_{\lambda}(t), \beta_{\lambda}(t), \varphi_{\lambda}(t)\}$ a reflecting boundary at zero: 
\begin{align*}
\widehat{L}_{\lambda}(t) = L_{\lambda}(t) - \inf_{0 \leq v \leq t} L_{\lambda}(v).  
\end{align*} 
By applying the one-sided reflection map to $Y(t)$ and to $L_{\lambda}(t)$, we know that the process $\widehat{Y}(t)$ exists uniquely and so does $\widehat{L}_{\lambda}(t)$. The following result immediately follows from Theorem~\ref{theo:conv}. 

\begin{cor}[\cite{ln13}] 
The processes $\{\widehat{L}_{\lambda}(t), \varphi_{\lambda}(t): t \geq 0\}$ weakly converge, as $\lambda \rightarrow \infty$, to the reflected one-sided MMBM $\{\widehat{Y}(t), \kappa(t): t \geq 0\}$. 
\end{cor}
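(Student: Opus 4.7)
The plan is to deduce this corollary from Theorem~\ref{theo:conv} through a standard application of the continuous mapping theorem, using the fact that the one-sided reflection map is Lipschitz continuous on the relevant path space.

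First, I would define the one-sided reflection functional $\Psi$ on the Skorokhod space $D([0,\infty),\R)$ by
\begin{equation*}
\Psi(x)(t) = x(t) - \inf_{0 \leq v \leq t} x(v),
\end{equation*}
and observe that, by construction, $\widehat{Y}(t) = \Psi(Y)(t)$ and $\widehat{L}_\lambda(t) = \Psi(L_\lambda)(t)$. I would then recall (or cite) the classical fact that $\Psi$ is Lipschitz continuous with respect to the uniform norm on compact time intervals, and is in particular continuous in the Skorokhod $J_1$ topology on $D([0,\infty),\R)$. Consequently, the map $(x, y) \mapsto (\Psi(x), y)$ is continuous from $D([0,\infty),\R) \times D([0,\infty),\mathcal{M})$ into itself at every point $(x,y)$ whose first coordinate is continuous, which covers the limit paths of $Y$ almost surely.

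Next, I would invoke Theorem~\ref{theo:conv}, which gives the joint weak convergence $\{(L_\lambda(t),\varphi_\lambda(t)): t \geq 0\} \Rightarrow \{(Y(t),\kappa(t)): t \geq 0\}$ as $\lambda \to \infty$ in the appropriate Skorokhod space. Applying the continuous mapping theorem to the functional $(x,y) \mapsto (\Psi(x),y)$ then yields
\begin{equation*}
\{(\widehat{L}_\lambda(t),\varphi_\lambda(t)): t \geq 0\} \Rightarrow \{(\widehat{Y}(t),\kappa(t)): t \geq 0\},
\end{equation*}
which is the claimed convergence.

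The only genuinely subtle point is verifying that the discontinuity set of $(x,y) \mapsto (\Psi(x),y)$ is a null set for the law of $(Y,\kappa)$. Since $Y$ has continuous sample paths almost surely (it is a Brownian motion with Markov-modulated drift and variance), this is immediate from the continuity of $\Psi$ under the uniform-on-compacts topology. Thus no additional work beyond citing Lipschitz continuity of $\Psi$ is required, which is why the corollary is stated as an immediate consequence of Theorem~\ref{theo:conv}.
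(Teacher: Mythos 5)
Your proof is correct and matches the paper's intent: the paper treats this corollary as an immediate consequence of Theorem~\ref{theo:conv} precisely because $\widehat{Y}$ and $\widehat{L}_\lambda$ are obtained by applying the one-sided reflection map, whose Lipschitz continuity combined with the continuous mapping theorem (and the a.s.\ continuity of the limit paths) gives the result. Your write-up simply makes explicit the argument the paper leaves implicit.
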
 

If the process $\{\widehat{Y}(t), \kappa(t)\}$ is positive recurrent,
{that is, if} $\bs{\alpha}D\bs{1} < 0$, then the two limits, of
$\lambda$ and of $t$, are interchangeable, {and} the limiting distribution of $\{\widehat{L}_{\lambda}(t), \varphi_{\lambda}(t)\}$ converge, as $\lambda \rightarrow \infty$, to the joint stationary distribution of $\{\widehat{Y}(t), \kappa(t)\}$ \cite[Theorem~3.6]{ln13}. 

\section{Reflected two-sided Markov-modulated Brownian motions}
	\label{sec:2s}
	
Here, we consider processes with not only a reflecting boundary at level zero but also one at level $b$, for some finite $b > 0$. Let $\widetilde{\mathcal{Y}} = \{\widetilde{Y}(t),\kappa(t): t \geq 0\}$ be the reflected two-sided MMBM associated with $\{Y(t), \kappa(t): t \geq 0\}$, where 
\begin{align*}
\widetilde{Y}(t) = Y(t) + W(t) - M(t) \in [0,b],  
\end{align*} 
with $W(t)$ and $M(t)$ being the local times at level zero and level $b > 0$, respectively. More specifically, $W(t)$ and $M(t)$ are processes that satisfy the following conditions: $W(t)$ and $M(t)$ are nondecreasing with $W(0) = M(0) = 0$; $\widetilde{Y}(s) = 0$ if $W(s) < W(t)$ for all $t > s$; and $\widetilde{Y}(s) = b$ if $M(s) < M(t)$ of all $t > s$. By applying the two-sided reflection map on $[0,b]$ (Kruk \emph{et al.}~\cite{klrs07}), we obtain existence and uniqueness for $\widetilde{Y}(t)$.

We denote by $\widetilde{\mathcal{L}}_{\lambda} = \{\widetilde{L}_{\lambda}(t), \beta_{\lambda}(t), \varphi_{\lambda}(t): t \geq 0\}$ the finite-buffer fluid process associated with the unbounded process $\{L_{\lambda}(t), \beta_{\lambda}(t), \varphi_{\lambda}(t)\}$, where for $\beta_{\lambda}(t) = k \in \{1,2\}$ and $\varphi_{\lambda}(t) = i \in \mathcal{M}$
\begin{align*}
\begin{array}{rlll} 
\displaystyle\frac{\ud}{\ud t} \widetilde{L}_{\lambda}(t) = & \hspace*{-0.2cm} c_{k,i} & \mbox{ if } \widetilde{L}_{\lambda}(t) \in (0,b), \\
                                                                  = & \hspace*{-0.2cm} \max\{0,c_{k,i}\} & \mbox{ if } \widetilde{L}_{\lambda}(t) = 0, \\
                                                                  = & \hspace*{-0.2cm} \min \{0,c_{k,i}\} & \mbox{ if } \widetilde{L}_{\lambda}(t) = b.                                                    
\end{array} 
\end{align*}
In other words, in between the boundaries at $0$ and at $b$ the process $\widetilde{L}_{\lambda}(t)$ evolves the same way $L_{\lambda}(t)$ does. Upon hitting level $0$ (or level $b$), $\widetilde{L}_{\lambda}(t)$ remains there until the phase process $\{\beta_{\lambda}(t), \varphi_{\lambda}(t)\}$ switches to a phase with positive rate (or, respectively, negative rate). The process $\widetilde{L}_{\lambda}(t)$ can be obtained by applying the two-sided reflection map on $[0,b]$ to $L_{\lambda}(t)$, and therefore exists uniquely. The following corollary follows immediately from Theorem~\ref{theo:conv}.
\begin{cor}
	\label{cor:2s-conv} 
	The processes $\{\widetilde{L}_{\lambda}(t), \varphi_{\lambda}(t): t \geq 0\}$ weakly converge to the reflected two-sided MMBM $\{\widetilde{Y}(t), \kappa(t): t \geq 0\}$, as $\lambda \rightarrow \infty$.
\end{cor}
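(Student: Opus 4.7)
The plan is to invoke the continuous mapping theorem, with the two-sided reflection map playing the role of the continuous functional. Theorem~\ref{theo:conv} already gives that $\{(L_{\lambda}(t),\varphi_{\lambda}(t)): t \ge 0\}$ converges weakly to $\{(Y(t),\kappa(t)): t \ge 0\}$ in the appropriate Skorokhod space (say $D([0,\infty),\R\times\mathcal{M})$). The key observation, already highlighted in the paragraph preceding the corollary, is that $\widetilde{L}_{\lambda}$ is obtained from $L_{\lambda}$ by the two-sided Skorokhod reflection map $\Gamma_{[0,b]}$ on $[0,b]$, and likewise $\widetilde{Y}$ is obtained from $Y$ by the same map.

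First, I would set up the map $\Phi\colon D([0,\infty),\R\times\mathcal{M}) \to D([0,\infty),[0,b]\times\mathcal{M})$ by $\Phi(x,\phi) = (\Gamma_{[0,b]}(x),\phi)$, leaving the phase coordinate untouched. By the results of Kruk \emph{et al.}~\cite{klrs07}, $\Gamma_{[0,b]}$ is Lipschitz continuous with respect to the uniform topology on $D([0,\infty),\R)$, and hence continuous in the Skorokhod $J_1$ topology at all paths whose coordinate in $\R$ is continuous. Since $Y(\cdot)$ is almost surely continuous (its sample paths are those of a Brownian motion between phase jumps, and the phase component is separated out in the product topology), the limit $(Y,\kappa)$ lies almost surely in the set of continuity points of $\Phi$.

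Second, I would apply the continuous mapping theorem: from $(L_{\lambda},\varphi_{\lambda}) \Rightarrow (Y,\kappa)$ and continuity of $\Phi$ at the limit, we conclude
\begin{equation*}
\Phi(L_{\lambda},\varphi_{\lambda}) \;=\; (\widetilde{L}_{\lambda},\varphi_{\lambda}) \;\Rightarrow\; (\Gamma_{[0,b]}(Y),\kappa) \;=\; (\widetilde{Y},\kappa),
\end{equation*}
as $\lambda\to\infty$, which is exactly the claim.

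The only step requiring a little care is the second one, namely justifying that the joint map $\Phi$ is continuous at the limit in the Skorokhod topology on the product space. This is where one uses that the phase process $\kappa$ is a pure jump process with finitely many jumps on compact intervals and that $Y$ has continuous sample paths, so that the Skorokhod topology on the product essentially decouples into uniform convergence for the level coordinate and $J_1$-convergence for the phase coordinate along almost every realization of the limit. Once this is in place, the conclusion is immediate from Theorem~\ref{theo:conv} and the continuous mapping theorem, matching the author's claim that the corollary follows immediately.
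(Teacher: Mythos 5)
Your proof is correct and follows the same route the paper intends: the corollary is stated as an immediate consequence of Theorem~\ref{theo:conv} precisely because $\widetilde{L}_{\lambda}$ and $\widetilde{Y}$ are the images of $L_{\lambda}$ and $Y$ under the Lipschitz-continuous two-sided reflection map of Kruk \emph{et al.}~\cite{klrs07}, so the continuous mapping theorem applies. Your additional care about continuity of the joint map at the (a.s.\ continuous-level) limit is exactly the detail the paper leaves implicit.
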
 

\begin{ass}
	\label{ass:drift} 
{The mean drift $\bs{\alpha} D \bs{1}$ of $\{Y(t), \kappa(t)\}$
  is different from zero.}
\end{ass} 

{The mean drift
of $\{\widetilde{L}_{\lambda}(t), \varphi_{\lambda}(t)\}$ is
 $\bs{\gamma}\hspace*{-0.05cm}\otimes\hspace*{-0.05cm}\bs{\alpha}
 C_{\lambda} \bs{1}$ and it is straightforward to verify that  
$\bs{\gamma}\hspace*{-0.05cm}\otimes \hspace*{-0.05cm}\bs{\alpha}
C_{\lambda} \bs{1} = \bs{\alpha} D \bs{1}$ independently of $\lambda$.}

In order to determine that the joint stationary distributions of approximating one-sided fluids $\{\widehat{L}_{\lambda}(t),\varphi_{\lambda}(t)\}$ converge to that of the one-sided MMBM $\{\widehat{Y}(t), \kappa(t)\}$, Latouche and Nguyen~\cite[Theorem~3.6]{ln13} show that the limiting distribution is equivalent to the stationary distribution of $\{\widehat{Y}(t), \kappa(t)\}$ obtained in Asmussen~\cite{asmussen95}. Here, we follow a more direct approach to show convergence of stationary distributions of two-sided processes. 

Denote by $\bs{\widetilde{F}}_{\lambda}(x)$ the joint stationary distribution vector of $\{\widetilde{L}_{\lambda}(t), \varphi_{\lambda}(t)\}$, with components 
\begin{align}
[\bs{\widetilde{F}}_{\lambda}(x)]_{i} & = \lim_{t \rightarrow \infty} \P[\widetilde{L}_{\lambda}(t) \leq x, \varphi_{\lambda}(t) = i] \quad \mbox{for } x \in [0,b] \mbox{ and } i \in \mathcal{M}. 
%
\intertext{and by $\bs{\widetilde{F}}(x)$ its element-wise limit, where}
%
	\label{eqn:limFx} 
[\bs{\widetilde{F}}(x)]_i & = \lim_{\lambda \rightarrow \infty}[\bs{\widetilde{F}}_{\lambda}(x)]_i \quad \mbox{for } x \in [0,b] \mbox{ and } i \in \mathcal{M}. 
\end{align} 
We prove in the next section that the limit $\bs{\widetilde{F}}(x)$ defined in~\eqref{eqn:limFx} exists. Here, to preserve the flow we assume its existence and show that this limit is indeed the stationary distribution $\bs{\widetilde{G}}(x)$ of $\{\widetilde{Y}(t), \kappa(t)\}$, where 
\begin{align*}
 [\bs{\widetilde{G}}(x)]_{i} = \lim_{t \rightarrow \infty} \P[\widetilde{Y}(t) \leq x, \kappa(t) = i] \quad \mbox{for } x \in [0,b] \mbox{ and } i \in \mathcal{M}. 
\end{align*} 

First, we extend Theorem~\ref{theo:conv} by modifying its assumptions that $L_{\lambda}(0) = 0$ and $Y(0) = 0$. 
%


\begin{theo}
  \label{theo:conv-newa} 
Assume that $(Y(0), \kappa(0))$ has the
    distribution $\bs{\widetilde{F}}$ and that $(L_{\lambda}(0),
    \varphi_{\lambda}(0))$ has the distribution
    $\bs{\widetilde{F}}_{\lambda}$.
The family of processes $\{L_{\lambda}(t), \varphi_{\lambda}(t): t \geq 0\}$ converge to the Markov-modulated Brownian motion $\{Y(t),\kappa(t): t \geq 0\}$, as $\lambda \rightarrow \infty$.  
\end{theo}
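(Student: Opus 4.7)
The idea is to reduce to Theorem~\ref{theo:conv} in two stages: first strengthen it to allow an arbitrary deterministic initial condition $(x,i) \in [0,b] \times \mathcal{M}$, then integrate against the initial distributions, which themselves converge.

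\emph{Step 1: arbitrary deterministic initial condition.} The dynamics of both $L_\lambda$ and $Y$ are spatially homogeneous: the drift and variance depend only on the current phase, not on the current level. Hence the law of $(L_\lambda(t), \varphi_\lambda(t))$ started from $(x,i)$ is the translate by $x$ of its law started from $(0,i)$, and similarly for $(Y,\kappa)$. Since $\bs{\alpha}$ puts strictly positive mass on every phase (by irreducibility of $Q$), conditioning on $\varphi_\lambda(0)=\kappa(0)=i$ preserves the weak convergence of Theorem~\ref{theo:conv} and gives convergence from the fixed starting phase $i$ and level $0$. Combined with translation invariance, this yields for every $(x,i)$ the weak convergence
$$\bigl(L_\lambda,\varphi_\lambda\bigr)\big|_{(x,i)} \;\Longrightarrow\; \bigl(Y,\kappa\bigr)\big|_{(x,i)}, \qquad \lambda\to\infty,$$
in the Skorokhod space $D([0,\infty))$.

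\emph{Step 2: convergence of initial distributions.} By the definition~\eqref{eqn:limFx}, $[\bs{\widetilde F}_\lambda(x)]_i \to [\bs{\widetilde F}(x)]_i$ pointwise on the compact state space $[0,b]\times\mathcal{M}$. Since the limit has at most countably many discontinuities (in fact, we expect atoms only at $0$ and $b$, as shown in Section~\ref{sec:form}), this pointwise convergence at all continuity points upgrades to weak convergence $\bs{\widetilde F}_\lambda \Rightarrow \bs{\widetilde F}$. By Skorokhod's representation theorem, we may realize on a common probability space random pairs $(X_\lambda,J_\lambda) \sim \bs{\widetilde F}_\lambda$ and $(X,J) \sim \bs{\widetilde F}$ with $(X_\lambda,J_\lambda) \to (X,J)$ almost surely; since $\mathcal{M}$ is discrete, $J_\lambda = J$ eventually a.s.

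\emph{Step 3: combining.} For any bounded continuous functional $F$ on Skorokhod space, set $f_\lambda(x,i) := \E_{(x,i)}[F(L_\lambda,\varphi_\lambda)]$ and $f(x,i) := \E_{(x,i)}[F(Y,\kappa)]$. By the Markov property,
$$\E\bigl[F(L_\lambda,\varphi_\lambda)\bigr] = \sum_i \int_{[0,b]} f_\lambda(x,i)\,d[\bs{\widetilde F}_\lambda(x)]_i.$$
Step~1 gives $f_\lambda \to f$ pointwise, and spatial homogeneity together with continuity of the shift map in Skorokhod topology implies that $f(\cdot,i)$ is continuous on $[0,b]$; each $f_\lambda$ is uniformly bounded by $\|F\|_\infty$. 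A standard $\varepsilon/3$ argument — or equivalently, exploiting the almost-sure coupling from Step~2 and bounded convergence applied to the pathwise convergence supplied by Step~1 — then yields $\E[F(L_\lambda,\varphi_\lambda)] \to \E[F(Y,\kappa)]$, which is the claimed weak convergence.

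\emph{Main obstacle.} The delicate point is the last step, where pointwise convergence of the functions $f_\lambda$ must be reconciled with only weak convergence of the measures $\bs{\widetilde F}_\lambda$. Neither continuity of $f$ alone nor pointwise convergence of $f_\lambda$ alone suffices; one needs in addition either an equicontinuity argument (uniform convergence on the compact set $[0,b]\times\mathcal{M}$, following from continuous dependence of the fluid trajectories on the initial level) or the Skorokhod coupling from Step~2, which provides almost-sure convergence of the initial data and reduces the problem to bounded convergence applied to Step~1 pathwise. Establishing continuity of $f$ in $x$, and justifying the interchange of limits, is the technical heart of the argument.
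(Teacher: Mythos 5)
Your route is genuinely different from the paper's, and it is viable, but its central step is flagged rather than finished. The paper does not mix path-space laws over the initial distribution at all. Instead it (a) proves convergence of finite-dimensional distributions by computing the conditional moment generating function: by spatial homogeneity and independence, the initial level enters only as the multiplicative factor $\E[\ue^{s\widetilde F_{\lambda|i}}]$, so the expectation factorizes as $\bs{\alpha}\,\Gamma_\lambda(s)\,\ue^{\Delta_\lambda(s)t}\bs{e}_j$; convergence then follows from $\Gamma_\lambda(s)\to\Gamma(s)$ (immediate from $\bs{\widetilde F}_\lambda\to\bs{\widetilde F}$ on the compact interval $[0,b]$) together with the already-known $\ue^{\Delta_\lambda(s)t}\to\ue^{\Delta_Y(s)t}$ from \cite{ln13}; and (b) verifies tightness via Billingsley's criterion, where the increment condition is unchanged from the zero-initial-condition case and the initial-condition bound is trivial because $L_\lambda(0)\le b$. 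The virtue of the MGF factorization is precisely that it decouples the convergence of the initial law from the convergence of the transition kernel, so the interchange-of-limits issue you isolate in your Step 3 never arises.

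In your version that issue remains open: you correctly observe that pointwise convergence $f_\lambda\to f$ plus weak convergence $\bs{\widetilde F}_\lambda\Rightarrow\bs{\widetilde F}$ do not by themselves give $\int f_\lambda\,d\bs{\widetilde F}_\lambda\to\int f\,d\bs{\widetilde F}$, but you do not close the gap. The ``bounded convergence applied to the pathwise convergence supplied by Step 1'' variant does not work as stated, because Step 1 supplies only weak convergence of the processes, not almost-sure convergence on the coupled space; you would have to Skorokhod-represent the processes jointly with the initial data. The clean repair is to upgrade Step 1 to \emph{continuous} convergence, $f_\lambda(x_\lambda,i)\to f(x,i)$ whenever $x_\lambda\to x$, which follows from spatial homogeneity, the (Lipschitz) continuity of the shift map in the Skorokhod metric, and the generalized continuous mapping theorem; this, with the portmanteau-type justification that conditioning on the $P$-continuity set $\{\varphi(0)=i\}$ (of positive mass $\alpha_i$) preserves weak convergence, completes your argument and in fact delivers path-space convergence without a separate tightness step. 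As written, however, the proof is an outline whose technical heart is acknowledged but not supplied.
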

\begin{proof} 
First, we prove that the finite-dimensional
distributions of $\{L_{\lambda}(t), \varphi_{\lambda}(t)\}$ converge
to those of $\{Y(t), \kappa(t)\}$ via convergence of moment generating
functions, that is, {we show that }
\begin{align}
	\label{eqn:limcmgf}
        \lim_{\lambda \rightarrow \infty}
        \E[\ue^{sL_{\lambda}(t)}\mathds{1}_{\{\varphi_{\lambda}(t) =
          j\}} | (L_{\lambda}(0),  \varphi_{\lambda}(0))=_d
        \bs{\widetilde{F}}_{\lambda}] \nonumber \\ 
        = \E[\ue^{sY(t)}\mathds{1}_{\{\kappa(t) = j\}} | (Y(0),
        \kappa (0)) =_d \bs{\widetilde{F}}].
\end{align} 
The marginal stationary distribution of the phase
$\varphi_\lambda$ is $\bs{\alpha}$, and we may write that
\begin{align}
\E[\ue^{sL_{\lambda}(t)} & \mathds{1}_{\{\varphi_{\lambda}(t) = j\}} |
(L_{\lambda}(0), \varphi_{\lambda}(0)) =_d  {\bs{\widetilde{F}}_{\lambda}}]
\nonumber \\[0.5\baselineskip]
& = \sum_{i \in \cal M} \alpha_i \E[\ue^{s L_{\lambda}(t)}
  \mathds{1}_{\{\varphi_{\lambda}(t) = j\}} | 
L_{\lambda}(0) =_d \bs{\widetilde{F}}_{\lambda | i},
\varphi_{\lambda}(0) = i] \nonumber 
\intertext{where $\bs{\widetilde{F}}_{\lambda | i}$ is the conditional
  stationary distribution of $\widetilde L_\lambda$, given that the
  phase is $i$,}
& = \sum_{i \in \mathcal{M}} \alpha_i \E[\ue^{s(L_{\lambda}(t) +
  \widetilde{F}_{\lambda | i})}\mathds{1}_{\{\varphi_{\lambda}(t) = j\}}
| L_{\lambda}(0) = 0, \varphi_{\lambda}(0) = i] \nonumber
\intertext{where $\widetilde{F}_{\lambda | i}$ is a random variable with
  distribution $\bs{\widetilde{F}}_{\lambda | i}$,}
& = \sum_{i \in \mathcal{M}} \alpha_i
\E[\ue^{s\widetilde{F}_{\lambda | i}}]
\E[\ue^{sL_{\lambda}(t)}\mathds{1}_{\{\varphi_{\lambda}(t) = j\}} |
L_{\lambda}(0) = 0, \varphi_{\lambda}(0) = i] \nonumber  \\
& = \bs{\alpha} \Gamma_\lambda(s)\ue^{\Delta_{\lambda}(s)t}\bs{e}_j 
\label{eqn:cmgfL}
\end{align}
where $\Gamma_\lambda(s)$ is a diagonal matrix with
$\E[\ue^{s\widetilde{F}_{\lambda | i}}] $, $i \in \cal M$, on the
diagonal, $\bs{e}_j$ is an $m \times 1$ vector with zeros in all
entries except the $j$th one, and $\Delta_{\lambda}(s)$ is the Laplace
matrix exponent of $\{L_{\lambda}(t), \varphi_{\lambda}(t)\}$ which satisfies
\begin{align*} 
[\ue^{\Delta_{\lambda}(s)}]_{ij} = \E[\ue^{sL_{\lambda}(t)}\mathds{1}_{\{\varphi_{\lambda}(t) = j\}}| L_{\lambda}(0) = 0, \varphi_{\lambda}(0) = i]
\end{align*} 
{for $i$, $j$ in ${\cal M}$.}   Similarly, 
\begin{align} 
 \E[\ue^{sY(t)} \mathds{1}_{\{\kappa(t) = j\}} | (Y(0), \kappa(0)) =_d
{\bs{\widetilde{F}}}]
 = \bs{\alpha} \Gamma(s)\ue^{\Delta_{Y}(s)t}\bs{e}_j,  
\label{eqn:cmgfY}
\end{align} 
where $\Gamma(s)$, by the definition of $\bs{\widetilde{F}}$, is the
limit of $\Gamma_\lambda(s)$ as $\lambda \rightarrow \infty$ and 
$\Delta_Y(s)$ is the Laplace matrix exponent of $\{Y(t), \kappa(t)\}$.
 In addition, 
Theorem~2.4 in~\cite{ln13} states
that $\lim_{\lambda \rightarrow \infty} \ue^{\Delta_{\lambda}(s)t} =
\ue^{\Delta_{Y}(s)t}$, and so (\ref{eqn:cmgfL},  \ref{eqn:cmgfY})
imply~\eqref{eqn:limcmgf}.

Now, 
we show that the family $\{L_{\lambda}(t), \beta_{\lambda}(t), \varphi_{\lambda}(t)\}$ is still tight under the new initial condition. By Theorem~8.3~in Billingsley~\cite{billingsley68} and Whitt~\cite{whitt70}, it is sufficient to verify the following two conditions 
\begin{enumerate} 
\item[(i)] for each $\eta > 0$, there exists $a$ such that 
\begin{align} 
\P[L_{\lambda}(0) > a] \leq \eta \quad \mbox{ for sufficiently large } \lambda, \nonumber
\end{align} 
\item[(ii)] for each $\varepsilon, \eta > 0$, there exists $\delta \in (0,1)$ and $\lambda_0$ such that 
\begin{align} \label{eqn:aim} 
\frac{1}{\delta} \P[\sup_{t \leq s \leq t + \delta} |L_{\lambda}(s) - L_{\lambda}(t)| \geq \varepsilon] \leq \eta \quad \mbox{ for all $\lambda \geq \lambda_0$ and $t > 0$.}
\end{align} 
\end{enumerate}
Condition~(ii) follows from the proof of Theorem~2.7 in~\cite{ln13}, which show that the family $\{L_{\lambda}(t), \beta_{\lambda}(t), \varphi_{\lambda}(t)\}$ is tight given that $L_{\lambda}(0) = 0$ and $\varphi_{\lambda}(0) =_d \bs{\alpha}$. Condition~(i) is immediately satisfied by setting $a = b$, the upper reflecting boundary. 
\end{proof} 
We are now ready to show that the two limits, of $\lambda$ and of $t$, are also interchangeable in the two-sided case. In other words, 
\begin{theo} 
	\label{theo:inter}
The limiting distribution of $\{\widetilde{L}_{\lambda}(t), \varphi_{\lambda}(t): t \geq 0\}$ converges, as $\lambda \rightarrow \infty$, to the stationary distribution of $\{\widetilde{Y}(t), \kappa(t): t \geq 0\}$. 
\end{theo}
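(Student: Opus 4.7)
The plan is to exploit stationarity: by initialising both families at their candidate stationary laws, convergence of the processes at a fixed time $t$ will identify the candidate limit $\bs{\widetilde{F}}$ as the invariant law of $\{\widetilde{Y}(t),\kappa(t)\}$. I would start $(L_\lambda(0),\varphi_\lambda(0))$ from $\bs{\widetilde{F}}_\lambda$ and $(Y(0),\kappa(0))$ from the assumed limit $\bs{\widetilde{F}}$, both supported in $[0,b]$. Since $\bs{\widetilde{F}}_\lambda$ is the stationary law of the two-sided reflected process $\widetilde{\mathcal{L}}_\lambda$, the marginal of $(\widetilde{L}_\lambda(t),\varphi_\lambda(t))$ is exactly $\bs{\widetilde{F}}_\lambda$ for every $t \geq 0$.

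The first step is to invoke Theorem~\ref{theo:conv-newa} to obtain weak convergence of the unbounded fluids $\{L_\lambda(t),\varphi_\lambda(t)\}$ to $\{Y(t),\kappa(t)\}$ under these initial conditions. The two-sided Skorohod reflection map on $[0,b]$ from~\cite{klrs07} is Lipschitz continuous on path space in the uniform-on-compacts topology, so by the continuous mapping theorem the reflected processes satisfy $\{\widetilde{L}_\lambda(t),\varphi_\lambda(t)\} \Rightarrow \{\widetilde{Y}(t),\kappa(t)\}$ as $\lambda \to \infty$. Extracting the marginal at any fixed $t > 0$ and combining with the assumed pointwise limit~\eqref{eqn:limFx}, namely $\bs{\widetilde{F}}_\lambda \to \bs{\widetilde{F}}$, one obtains that $(\widetilde{Y}(t),\kappa(t))$ has law $\bs{\widetilde{F}}$ for \emph{every} $t \geq 0$, so $\bs{\widetilde{F}}$ is invariant for $\{\widetilde{Y}(t),\kappa(t)\}$.

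The remaining step is uniqueness: since $\widetilde{Y}$ is confined to the compact interval $[0,b]$ and $\kappa$ is irreducible on $\mathcal{M}$, the two-sided reflected MMBM admits a unique stationary distribution, so $\bs{\widetilde{F}} = \bs{\widetilde{G}}$. The main obstacle is justifying the transfer of weak convergence through the two-sided reflection map and checking that stationarity of the approximating fluid at fixed $t$ persists in the limit $\lambda \to \infty$; once continuity of the reflection map from~\cite{klrs07} is invoked, the argument reduces to reading off a marginal of a weakly convergent family and appealing to uniqueness of the invariant measure on the compact level space. Note that Assumption~\ref{ass:drift}, which distinguishes the positive-recurrent and transient regimes for the unreflected process, is not needed here because the reflections at $0$ and at $b$ already ensure positive recurrence.
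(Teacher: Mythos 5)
Your proposal is correct and follows essentially the same route as the paper: start $(L_\lambda(0),\varphi_\lambda(0))$ from $\bs{\widetilde{F}}_\lambda$ and $(Y(0),\kappa(0))$ from $\bs{\widetilde{F}}$, use Theorem~\ref{theo:conv-newa} together with the reflection map to pass the time-$t$ marginals to the limit, and conclude from stationarity of $\bs{\widetilde{F}}_\lambda$ that the law of $(\widetilde{Y}(t),\kappa(t))$ is $\bs{\widetilde{F}}$ for every $t$. You are slightly more explicit than the paper on two points it leaves implicit --- the Lipschitz continuity of the two-sided reflection map and the uniqueness of the invariant law --- but the argument is the same.
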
 
\begin{proof} 
Let $\{0 = t_0 \leq t_1 \leq t_2 \leq \cdots\}$ be a sequence of {arbitrary} time epochs. 
{Assume that $\{L_{\lambda}(0),\varphi_{\lambda}(0)\}$ has the
  distribution $\bs{\widetilde{F}}_{\lambda}$.  For
} $x \in [0,b]$ and $i \in \mathcal{M}$ 
\begin{align}
	\label{eqn:Ltk} 
\P[\widetilde{L}_{\lambda}(t_k) \leq x, \varphi_{\lambda}(t_k) = i] & = \P[\widetilde{L}_{\lambda}(0) \leq x, \varphi_{\lambda}(0) = i] \quad \mbox{ for all } k \geq 0.  
\end{align} 
On the other hand, Theorem~\ref{theo:conv-newa} implies that for $x \in [0,b]$ and $i \in \mathcal{M}$,
\begin{align}
\P[\widetilde{Y}(t_k) \leq x, \kappa(t_k) = i] & = \lim_{\lambda \rightarrow \infty} \P[\widetilde{L}_{\lambda}(t_k) \leq x, \varphi_{\lambda}(t_k) = i]  \nonumber \\
& = \lim_{\lambda \rightarrow \infty} \P[\widetilde{L}_{\lambda}(0) \leq x, \varphi_{\lambda}(0) = i]  \nonumber \\
& = [\bs{F}(x)]_i \nonumber 
\end{align} 
independently of $t_k$. Thus, $\bs{F}$ is the stationary distribution of $\{\widetilde{Y}(t), \kappa(t)\}$. 
\end{proof}


\section{Stationary distribution of two-sided MMBM} 
	\label{sec:form}  

In light of Theorem~\ref{theo:inter}, a key component for obtaining the stationary distribution of the two-sided MMMBM $\{\widetilde{Y}(t), \kappa(t)\}$ is the stationary distribution of the finite-buffer fluid process $\{\widetilde{L}_{\lambda}(t), \beta_{\lambda}(t), \varphi(t)\}$. 

For $k = 1, 2$ and $i \in \mathcal{M}$, let   
\begin{align*} 
{\pi}^{(b)}_{k,i}(x) & = \lim_{t \rightarrow \infty}  \frac{\ud}{\ud
  x} \P[\widetilde{L}{_\lambda}(t) \leq x, \beta_{\lambda}(t) =
k, \varphi_{\lambda}(t) = i], 
\qquad \mbox{{for $0 < x < b$,}}
\\ 
p_{k,i}^{(0)} & = \lim_{t \rightarrow \infty} \frac{\ud}{\ud x} \P[\widetilde{L}{_\lambda} (t) = 0, \beta_{\lambda}(t) = k, \varphi_{\lambda}(t) = i], \\
p_{k,i}^{(b)} & = \lim_{t \rightarrow \infty} \frac{\ud}{\ud x} \P[\widetilde{L}{_\lambda} (t) = b, \beta_{\lambda}(t) = k, \varphi_{\lambda}(t) = i] 
\end{align*} 
be the stationary density function and probability masses at two 
boundaries of $\{\widetilde{L}{_\lambda} (t), \beta_{\lambda}(t),
\varphi_{\lambda}(t)\}$, respectively. Also, define the stationary
density vector  
\begin{align*} 
\bs{\pi}^{(b)}(x) = (\pi_{k,i})_{k \in \{1,2\}, i \in \mathcal{M}} =
\vligne{\bs{\pi}^{(b)}_{+}(x) & \bs{\pi}^{(b)}_{-}(x)},
\end{align*} 
and the stationary probability mass vectors 
\begin{align*} 
\bs{p}^{(0)} = (p_{k,i}^{(0)})_{k \in \{1,2\}, i \in \mathcal{M}} \quad \mbox{ and } \quad \bs{p}^{(b)} = (p_{k,i}^{(b)})_{k \in \{1,2\}, i \in \mathcal{M}}.
\end{align*}
By their physical interpretations, $\bs{p}^{(0)} =
(\bs{0},\bs{p}_{-}^{(0)})$ and $\bs{p}^{(b)} =
(\bs{p}_{+}^{(b)},\bs{0})$. Da Silva Soares and
Latouche~\cite[Theorems~4.4 and 5.1]{dL05} give a representation for
the stationary density and probability masses at boundaries of a
finite-buffer fluid model, given that the rates of the fluid level are
restricted to $\pm$1. We extend their results to the case with general
rates, {for} which {we} require some notation and definitions. 

Let us partition the generator matrix $T_{\lambda}$ and the rate matrix $C_{\lambda}$ according to phases with positive and negative rates as follows
\begin{align*} 
T_{\lambda} = \vligne{ T_{++} & T_{+-} \\ T_{-+} & T_{--} } \quad \mbox{ and } \quad C_{\lambda} = \vligne{ C_{+} & \\ & C_{-} }. 
\end{align*} 
For notational convenience when dealing with expansion of infinite series later, we write $\lambda = 1/\varepsilon^2$. Next, define the matrices 
\begin{align*}
U_{\varepsilon} & = |C_{-}|^{-1}  T_{--} + |C_{-}|^{-1}T_{-+} \Psi_{\varepsilon}, \\
{U}^*_{\varepsilon} & = C_{+}^{-1}T_{++} + C_{+}^{-1}T_{+-}{\Psi}^*_{\varepsilon}, \\
K_{\varepsilon} & = C_{+}^{-1}T_{++} + \Psi_{\varepsilon}|C_{-}|^{-1}T_{-+}, \\
{K}^*_{\varepsilon} & = |C_{-}|^{-1}T_{--} + {\Psi}^*_{\varepsilon}C_{+}^{-1}T_{+-},
\end{align*} 
where $\Psi_{\varepsilon}$ is the minimal nonnegative solution to the Riccati equation 
\begin{align*}
C_{+}^{-1} T_{+-} + C_+^{-1}T_{++}\Psi_{\varepsilon} + \Psi_{\varepsilon} |C_{-}|^{-1} T_{--} + \Psi_{\varepsilon} |C_{-}|^{-1} T_{-+}\Psi_{\varepsilon} = 0,   
\end{align*} 
and ${\Psi}^*_{\varepsilon}$ is the minimal nonnegative solution to the Riccati equation 
\begin{align*} 
|C_{-}|^{-1} T_{-+} + |C_{-}|^{-1}T_{--}{\Psi}^*_{\varepsilon} + {\Psi}^*_{\varepsilon} C_{+}^{-1} T_{++} + {\Psi}^*_{\varepsilon} C_{+}^{-1} T_{+-}{\Psi}^*_{\varepsilon} = 0.   
\end{align*} 
It is well-known that $\Psi_\varepsilon$ and ${\Psi}^*_{\varepsilon}$
have probabilistic interpretations: $\Psi_{\varepsilon}$ records
return probabilities from above to initial level in the boundary-free
fluid process $\{{L}_{\lambda}(t), \varphi_{\lambda}(t)\}$, and
${\Psi}^*_{\varepsilon}$ records return probabilities from below to
initial level. 

\begin{lem}[{\cite{ln13}}] 
   \label{t:psi}
\begin{align}
	\label{eqn:Psie} 
\Psi_{\varepsilon} & = I + \varepsilon \Psi_1 + O(\varepsilon^2), \\
	\label{eqn:Psies}
\Psi_{\varepsilon}^* & = I + \varepsilon \Psi_1^* + O(\varepsilon^2), 
\end{align} 
where $\Theta^{-1}\Psi_1$ and $-\Theta^{-1}\Psi_1^*$ are solutions to the matrix quadratic equation $$\frac{1}{2}VX^2 + DX + Q = 0,$$ such that 
\begin{enumerate} 
\item[(i)] if $\bs{\alpha} D \bs{1} > 0$, $\Theta^{-1}\Psi_1$ has one eigenvalue equal to zero and $m - 1$ eigenvalues with strictly negative real part, it is the unique such solution; $-\Theta^{-}\Psi_1^*$ has $m$ eigenvalues with strictly positive real parts, it, too, is the unique such solution,
\item[(ii)] if $\bs{\alpha} D \bs{1} < 0$, $\Theta^{-1}\Psi_1$ has $m$ eigenvalues with strictly negative part, and $-\Theta^{-1}\Psi_1^*$ has $m - 1$ eigenvalues with strictly positive real parts and one eigenvalue equal to zero; both $\Psi^{-1}\Psi_1$ and $-\Theta^{-1}\Psi_1^*$ are still unique such solutions.
\end{enumerate} 
\end{lem}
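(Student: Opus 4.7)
The plan is a direct perturbation analysis of the Riccati equations that define $\Psi_\varepsilon$ and $\Psi_\varepsilon^*$. Writing $\lambda = 1/\varepsilon^2$, the relevant blocks become $T_{++} = T_{--} = Q - \lambda I$, $T_{+-} = T_{-+} = \lambda I$, $C_+ = D + \Theta/\varepsilon$ and $|C_-| = \Theta/\varepsilon - D$, from which I would derive the expansions
\begin{align*}
C_+^{-1} &= \varepsilon \Theta^{-1} - \varepsilon^2 \Theta^{-1}D\Theta^{-1} + O(\varepsilon^3), \\
|C_-|^{-1} &= \varepsilon \Theta^{-1} + \varepsilon^2 \Theta^{-1}D\Theta^{-1} + O(\varepsilon^3),
\end{align*}
so that $C_+^{-1} + |C_-|^{-1} = 2\varepsilon\Theta^{-1} + O(\varepsilon^3)$ and $\lambda(|C_-|^{-1} - C_+^{-1}) = 2\Theta^{-1}D\Theta^{-1} + O(\varepsilon^2)$. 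Substituting the ansatz $\Psi_\varepsilon = I + \varepsilon X_\varepsilon$ into the Riccati equation and collecting terms order by order in $\varepsilon$ is then the main computation.

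The contributions of order $\varepsilon^{-1}$ and of order $\varepsilon^0$ cancel identically, thanks to the structural identities $T_{++} + T_{+-} = Q$ and $T_{--} + T_{-+} = Q$: the divergent pieces $\lambda C_+^{-1}$ and $\lambda|C_-|^{-1}$ appear with matching signs in $C_+^{-1}T_{+-}$, $C_+^{-1}T_{++}\Psi_\varepsilon$, $\Psi_\varepsilon|C_-|^{-1}T_{--}$ and $\Psi_\varepsilon\lambda|C_-|^{-1}\Psi_\varepsilon$, and combine to leave only $O(\varepsilon)$ residue. The coefficient of $\varepsilon^1$ then reads
\begin{align*}
2\Theta^{-1}Q + 2\Theta^{-1}D\Theta^{-1}\Psi_1 + \Psi_1\Theta^{-1}\Psi_1 = 0.
\end{align*}
Left-multiplying by $\Theta/2$ and setting $Y = \Theta^{-1}\Psi_1$ uses $\Psi_1\Theta^{-1}\Psi_1 = \Theta Y^2$ to recover $\tfrac12 VY^2 + DY + Q = 0$. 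Applying the same scheme to the Riccati equation for $\Psi_\varepsilon^*$, in which the roles of $C_+$ and $|C_-|$ are swapped and hence the $\lambda(|C_-|^{-1} - C_+^{-1})$ term changes sign, delivers the analogous matrix quadratic for $Z = -\Theta^{-1}\Psi_1^*$.

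For the spectral statements in (i) and (ii), I would appeal to the standard Wiener--Hopf dichotomy for the matrix polynomial $F(X) = \tfrac12 VX^2 + DX + Q$: its $2m$ eigenvalues split according to the sign of the real part, and the two solutions $\Theta^{-1}\Psi_1$ and $-\Theta^{-1}\Psi_1^*$ select the non-positive and non-negative halves, with the correct branch identified by the probabilistic interpretation of $\Psi_\varepsilon$ and $\Psi_\varepsilon^*$ as (substochastic) return-probability matrices. The fact that exactly one eigenvalue crosses the imaginary axis as $\bs{\alpha}D\bs{1}$ changes sign would follow from a perturbation analysis of the simple zero eigenvalue of $Q$, using the left and right null vectors $\bs{\alpha}$ and $\bs{1}$: differentiating the dispersion relation $\det F(\zeta) = 0$ at $\zeta = 0$ shows that the sign of the tangent to the zero-eigenvalue branch is controlled by $\bs{\alpha} D \bs{1}$, so the zero is assigned to the stable side when the drift is positive and to the unstable side when the drift is negative. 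Uniqueness on each side is the classical result that $F$ has a unique solution with spectrum in a prescribed closed half-plane once the splitting is fixed.

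The main obstacle is the orderwise bookkeeping in the expansion: because $\lambda = 1/\varepsilon^2$ enters simultaneously through $C_\pm^{-1}$ and through the $\lambda I$ off-diagonal blocks of $T_\lambda$, the cancellations at orders $\varepsilon^{-1}$ and $\varepsilon^0$ are delicate and must be carried out carefully before the matrix quadratic can be extracted at order $\varepsilon^1$. A secondary technical point is justifying a priori that $\Psi_\varepsilon$ is differentiable in $\varepsilon$ at $\varepsilon = 0$, so that the ansatz $\Psi_\varepsilon = I + \varepsilon\Psi_1 + O(\varepsilon^2)$ is legitimate; this can be handled via the implicit function theorem applied to the Riccati equation, using the characterisation of $\Psi_\varepsilon$ as the minimal nonnegative solution to single out the correct branch.
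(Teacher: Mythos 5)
The paper does not actually prove this lemma: it is imported verbatim from \cite{ln13}, with only a remark that the case $\bs{\alpha}D\bs{1}>0$ follows ``by analogous reasoning'' to Lemma~3.4 there. Your reconstruction follows the same route as that reference: a perturbation expansion of the Riccati equation, extraction of the matrix quadratic at order $\varepsilon$, and a spectral splitting of $\det(\tfrac12 V\zeta^2+D\zeta+Q)$ with the zero root assigned to one side or the other according to the sign of $\bs{\alpha}D\bs{1}$. Your bookkeeping is correct: the $\varepsilon^{-1}$ and $\varepsilon^{0}$ contributions cancel exactly as you say, the $\varepsilon^{1}$ coefficient is $2\Theta^{-1}Q+2\Theta^{-1}D\Theta^{-1}\Psi_1+\Psi_1\Theta^{-1}\Psi_1=0$, and left-multiplying by $\Theta/2$ with $Y=\Theta^{-1}\Psi_1$ (so that $\Psi_1\Theta^{-1}\Psi_1=\Theta Y^2$) recovers $\tfrac12 VY^2+DY+Q=0$; the sign flip for $\Psi_\varepsilon^*$ and the count of eigenvalues ($m-1$ strictly stable, one zero, $m$ strictly unstable when $\bs{\alpha}D\bs{1}>0$, and symmetrically otherwise) are also right.

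The one genuine gap is the justification of the ansatz. You propose the implicit function theorem, but it does not apply here: after multiplying the Riccati equation by $\varepsilon$, the limiting equation at $\varepsilon=0$ is $(I-\Psi)\Theta^{-1}(I-\Psi)=0$, whose Fr\'echet derivative at the root $\Psi=I$ is identically zero. The root is degenerate --- which is precisely why the branch behaves like $I+\varepsilon\Psi_1$ rather than being analytic in $\varepsilon^2=1/\lambda$ --- so the IFT gives neither existence of $\Psi_1$ nor the $O(\varepsilon^2)$ remainder. You need a different argument for differentiability at $\varepsilon=0$: for instance the invariant-subspace (spectral) representation of $\Psi_\varepsilon$ together with the separation of the relevant eigenvalues of the associated matrix pencil, a Newton--Puiseux analysis of the algebraic branch, or the probabilistic estimates used in \cite{ln13} to establish the expansion directly. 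With that point repaired, the remainder of your argument (uniqueness of the solution with spectrum in a prescribed closed half-plane, and the perturbation of the simple zero eigenvalue of $Q$ controlled by $\bs{\alpha}D\bs{1}$) is standard and correctly invoked.
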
 

\begin{rem} \rm Lemma~3.4 in~\cite{ln13} gives the eigenvalue 
  characterisations of $\Theta^{-1}\Psi_1$ and $-\Theta^{-1}\Psi_1^*$
  in the case when the mean drift $\bs{\alpha} D \bs{1} $ is
  negative. We employ analogous reasoning to extend the
  results to the case $\bs{\alpha} D \bs{1} > 0$.
\end{rem}

\begin{rem} \rm{Let $\tau^{\pm}_x = \inf \{t {> 0}: \pm Y(t) > x\}$ be the first passage times to the corresponding levels $x$ and $-x$ of the unbounded process $Y(t)$. Under the assumption that $\sigma_i > 0$ for all $i \in \mathcal{M}$, it is easy to confirm that $\Theta^{-1}\Psi_1$ and $\Theta^{-1}\Psi_1^*$ are the same as, respectively, the generators $\Lambda^{-}$ and $\Lambda^{+}$ of the time-changed processes $\kappa(\tau_x^-)$ and $\kappa(\tau_x^+)$ in Ivanovs~\cite{ivanovs10}, and $\Theta^{-1}\Psi_1^*$ is the same as the matrix $U(\gamma)$ for $\gamma = 0$ in Breuer~\cite{breuer08}.}
\end{rem}

Now we are ready to express the stationary density for the general case. 
\begin{theo}                         
	\label{theo:finitefluid}
		The stationary density vector $\bs{\pi}^{(b)}(x)$, for $0 < x < b$, of the finite-buffer fluid process $\{\widetilde{L}_{\lambda}(t), \beta_{\lambda}(t), \varphi_{\lambda}(t)\}$ is given by  
	\begin{align}
   \label{e:oldpi}
	\bs{\pi}_{\varepsilon}^{(b)}(x) & = \vect y
\vligne{ 
	\ue^{K_{\varepsilon}x} & 0  \\
	0 & \ue^{{K}^*_{\varepsilon}(b - x)} }
	\left[\begin{array}{rr}
	C_{+}^{-1} & \Psi_{\varepsilon} C_{-}^{-1}\\
	 \Psi^*_{\varepsilon} C_{+}^{-1} & |C_{-}|^{-1}
	 \end{array}\right]
	   \end{align}
	where
\begin{align} 
   \label{e:y}
  \vect y = \vligne{\bs{y}_{+} & \bs{y}_{-}} & =
  \vligne{\bs{p}_{+}^{(b)} & \bs{p}_{-}^{(0)}}
	\vligne{	0 & T_{+-} \\     T_{-+} & 0 	}
    N^{-1}
\end{align} 
and
\[
{N} =
	\vligne{ 
	I & \ue^{K_{\varepsilon}b}\Psi_{\varepsilon} \\ 
	\ue^{{K}^*_{\varepsilon}b}{\Psi}^*_{\varepsilon} & I 
	} .
\]
The boundary probability masses $\bs{p}_{+}^{(b)}$, $\bs{p}_{-}^{(0)}$ satisfy the system of equations 
\begin{align}
	\label{eqn:pms-1} 
& \vligne{\bs{p}^{(b)}_{+} & \bs{p}^{(0)}_{-}} W_{\varepsilon} = 0, \\
	\label{eqn:pms-2} 
& \vligne{\bs{p}_{+}^{(b)} & \bs{p}_{-}^{(0)}}\bs{1} + \int_0^b
\vligne{\bs{\pi}_{+}^{(b)}(x) & \bs{\pi}_{-}^{(b)}(x)} \bs{1} \ud x  =
  1,  
\end{align} 
with 
\begin{equation}
W_{\varepsilon} = 
\vligne{
T_{++} & 0\\
0 & T_{--} 
}
+ \vligne{
0 & T_{+-} \\
T_{-+} & 0
}
G^{(b)},
\end{equation}
where the matrix $G^{(b)}$ defined as 
\begin{equation}
G^{(b)} = 
\vligne{
\Lambda_{++}^{(b)} & \Psi_{+-}^{(b)} \\
\widetilde{\Psi}_{-+}^{(b)} & \widetilde{\Lambda}_{--}^{(b)} }
\end{equation}
is the solution of the system 
\begin{align}
	\label{eqn:probmats} 
\vligne{
\Lambda_{++}^{(b)} & \Psi_{+-}^{(b)} \\
\vspace*{-0.3cm} \\
\widetilde{\Psi}_{-+}^{(b)} & \widetilde{\Lambda}_{--}^{(b)}
}
\vligne{
I & \Psi_{\varepsilon}\ue^{U_{\varepsilon}b} \\
{\Psi}^*_{\varepsilon}\ue^{{U}^*_{\varepsilon}b} & I 
}
= \vligne{\ue^{{U}^*_{\varepsilon}b} & \Psi_{\varepsilon} \\
{\Psi}^*_{\varepsilon} & \ue^{U_{\varepsilon}b}}. 
\end{align} 
\end{theo}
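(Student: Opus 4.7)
The plan is to establish~(\ref{e:oldpi})--(\ref{eqn:probmats}) by solving directly the stationary Kolmogorov equations for $\{\widetilde L_\lambda(t),\beta_\lambda(t),\varphi_\lambda(t)\}$. These consist of the balance equation $\frac{\ud}{\ud x}\bigl(\bs{\pi}^{(b)}_\varepsilon(x)C_\lambda\bigr) = \bs{\pi}^{(b)}_\varepsilon(x) T_\lambda$ on $(0,b)$, together with the flow-continuity conditions $\bs{\pi}^{(b)}_{\varepsilon,+}(0)C_+ = \bs p^{(0)}_- T_{-+}$ and $\bs{\pi}^{(b)}_{\varepsilon,-}(b)\lvert C_-\rvert = \bs p^{(b)}_+ T_{+-}$ at the two boundaries, and the atomic-balance conditions $\bs p^{(0)}_- T_{--} + \bs{\pi}^{(b)}_{\varepsilon,-}(0)\lvert C_-\rvert = \bs 0$ and $\bs p^{(b)}_+ T_{++} + \bs{\pi}^{(b)}_{\varepsilon,+}(b)C_+ = \bs 0$. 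I first substitute the ansatz~(\ref{e:oldpi}) into the ODE; by linearity it suffices to check separately that the two modes $\ue^{K_\varepsilon x}\bigl[\,C_+^{-1}\ \ \Psi_\varepsilon\lvert C_-\rvert^{-1}\bigr]$ and $\ue^{K^*_\varepsilon(b-x)}\bigl[\,\Psi^*_\varepsilon C_+^{-1}\ \ \lvert C_-\rvert^{-1}\bigr]$ are fundamental solutions, and a short differentiation reduces this to the defining relations of $K_\varepsilon,K^*_\varepsilon$ and to the two Riccati equations for $\Psi_\varepsilon,\Psi^*_\varepsilon$.

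Second, evaluating the flow-continuity conditions at $x=0$ and $x=b$ yields the linear system $\vect y\, N = \vligne{\bs p^{(b)}_+ & \bs p^{(0)}_-}\vligne{0 & T_{+-} \\ T_{-+} & 0}$, so~(\ref{e:y}) follows upon inverting $N$. The two atomic-balance conditions then combine into
\[
\vligne{\bs p^{(b)}_+ & \bs p^{(0)}_-}\vligne{T_{++} & 0 \\ 0 & T_{--}} + \vect y\vligne{\ue^{K_\varepsilon b} & \Psi_\varepsilon \\ \Psi^*_\varepsilon & \ue^{K^*_\varepsilon b}} = \bs 0,
\]
and substituting~(\ref{e:y}) gives~(\ref{eqn:pms-1}) under the algebraic identification $G^{(b)} = N^{-1}\vligne{\ue^{K_\varepsilon b} & \Psi_\varepsilon \\ \Psi^*_\varepsilon & \ue^{K^*_\varepsilon b}}$. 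The normalisation~(\ref{eqn:pms-2}) is the total-probability condition.

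It remains to check that this $G^{(b)}$ solves~(\ref{eqn:probmats}). Probabilistically, the four blocks of $G^{(b)}$ have the natural interpretation as first-passage matrices of the bounded fluid between the two reflecting boundaries, and decomposing each first-passage matrix of the \emph{unbounded} process between $(0,\pm)$ and $(b,\pm)$ according to whether the trajectory reaches the opposite boundary before returning to its originating level produces exactly the four scalar equations comprising~(\ref{eqn:probmats}). At the algebraic level, the two formulations agree once one establishes the duality $K_\varepsilon = U^*_\varepsilon$ and $K^*_\varepsilon = U_\varepsilon$, equivalently $\Psi_\varepsilon\lvert C_-\rvert^{-1}T_{-+} = C_+^{-1}T_{+-}\Psi^*_\varepsilon$, which follows from the two Riccati equations and the minimality of $\Psi_\varepsilon,\Psi^*_\varepsilon$.

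The main obstacle I anticipate is this last identification of $G^{(b)}$: the ODE analysis naturally produces a \emph{$K$-formulation}, whereas~(\ref{eqn:probmats}) is stated in the \emph{$U$-formulation}, and the bridge between them requires the duality identity above. An alternative route, which avoids this issue, is to reduce the general-rate case to the $\pm 1$-rate case treated in~\cite{dL05} via the time-change $\ud s = \lvert c_{\varphi(t)}\rvert\,\ud t$, under which $\Psi_\varepsilon$, $\Psi^*_\varepsilon$, $K_\varepsilon$, $K^*_\varepsilon$, $U_\varepsilon$, $U^*_\varepsilon$ and $G^{(b)}$ are all invariant and the stationary density simply rescales by $\lvert C_\lambda\rvert^{-1}$.
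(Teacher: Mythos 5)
Your route is genuinely different from the paper's: the paper proves the theorem by redoing the probabilistic censoring/renewal argument of \cite[Theorems~4.4 and 5.1]{dL05} for general rates (essentially your ``alternative route'' via reduction to unit rates), whereas you verify the ansatz against the stationary Kolmogorov equations. Most of your argument is sound. The two modes do satisfy the interior balance equation, the second one reducing exactly to the Riccati equation for $\Psi_\varepsilon$ via $K_\varepsilon\Psi_\varepsilon = -C_+^{-1}T_{+-}-\Psi_\varepsilon|C_-|^{-1}T_{--}$; the flow-continuity conditions give $\vect y N = \vligne{\bs{p}_{+}^{(b)} & \bs{p}_{-}^{(0)}}\vligne{0 & T_{+-}\\ T_{-+} & 0}$, hence (\ref{e:y}); and the atomic balance yields (\ref{eqn:pms-1}) with the identification $G^{(b)} = N^{-1}\vligne{\ue^{K_{\varepsilon}b} & \Psi_{\varepsilon}\\ \Psi^*_{\varepsilon} & \ue^{K^*_{\varepsilon}b}}$.

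The gap is the bridge to the $U$-formulation (\ref{eqn:probmats}). The identity you invoke, $K_\varepsilon = U^*_\varepsilon$, equivalently $\Psi_\varepsilon|C_-|^{-1}T_{-+} = C_+^{-1}T_{+-}\Psi^*_\varepsilon$, is false in general and does not follow from the Riccati equations: $U^*_\varepsilon$ generates the upward first-passage process of $\{L_\lambda(t),\varphi_\lambda(t)\}$ itself, while $K_\varepsilon$ is conjugate to the first-passage generator of the \emph{time-reversed} process. This is visible in Section~\ref{sec:comp} of the paper, where $U^*_0=\Theta^{-1}\Psi_1^*$ is identified with Ivanovs' $\Lambda^+$ but $K_0$ with a similarity transform of $\overline{\Omega}_{+}^{\top}$; the two coincide only for reversible modulation. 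What actually reconciles your $K$-form of $G^{(b)}$ with (\ref{eqn:probmats}) is the intertwining relation
\begin{equation*}
K_\varepsilon\,(I - \Psi_\varepsilon\Psi^*_\varepsilon) = (I - \Psi_\varepsilon\Psi^*_\varepsilon)\,U^*_\varepsilon,
\qquad
K^*_\varepsilon\,(I - \Psi^*_\varepsilon\Psi_\varepsilon) = (I - \Psi^*_\varepsilon\Psi_\varepsilon)\,U_\varepsilon,
\end{equation*}
which \emph{does} follow from the two Riccati equations: combining $K_\varepsilon\Psi_\varepsilon = -C_+^{-1}T_{+-}-\Psi_\varepsilon|C_-|^{-1}T_{--}$ with $|C_-|^{-1}T_{-+}+|C_-|^{-1}T_{--}\Psi^*_\varepsilon = -\Psi^*_\varepsilon U^*_\varepsilon$ gives $K_\varepsilon - K_\varepsilon\Psi_\varepsilon\Psi^*_\varepsilon = U^*_\varepsilon - \Psi_\varepsilon\Psi^*_\varepsilon U^*_\varepsilon$. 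Iterating yields $\ue^{K_\varepsilon b}(I-\Psi_\varepsilon\Psi^*_\varepsilon)=(I-\Psi_\varepsilon\Psi^*_\varepsilon)\ue^{U^*_\varepsilon b}$ and its starred analogue, after which a block computation shows that your $N^{-1}\vligne{\ue^{K_{\varepsilon}b} & \Psi_{\varepsilon}\\ \Psi^*_{\varepsilon} & \ue^{K^*_{\varepsilon}b}}$ equals the solution of (\ref{eqn:probmats}). With that substitution (and an appeal to uniqueness of the stationary distribution to conclude that the verified solution is \emph{the} stationary density), your proof goes through; as stated, the bridging step would fail.
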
 

\begin{proof}  This is shown by adapting the proof of \cite[Theorem
  4.4]{dL05} to the general case where the fluid rates may be
  different from 1 or $-1$.  
\end{proof} 

We give below another expression for the stationary distribution,
which will be more convenient in the sequel.

\begin{cor}                            
	\label{theo:finitefluidalt}
        The stationary density vector $\bs{\pi}{_\epsi}^{(b)}(x)$, for $0 < x
        < b$, and the probability masses $\bs{p}_{+}^{(b)}$ and
        $\bs{p}_{-}^{(0)}$ may also be written as
\begin{align}
   \label{e:piepsilon}
	\bs{\pi}_{\varepsilon}^{(b)}(x) & = c \vligne{\bs{\nu}_{+} &
        \bs{\nu}_{-}} {N}^{-1}
        \vligne{ 
	\ue^{K_{\varepsilon}x} & 0  \\
	0 & \ue^{{K}^*_{\varepsilon}(b - x)} }
	\left[\begin{array}{rr}
	C_{+}^{-1} & \Psi_{\varepsilon} |C_{-}|^{-1}\\
	 \Psi^*_{\varepsilon} C_{+}^{-1} & |C_{-}|^{-1}
	 \end{array}\right]
\intertext{and} 
   \label{e:pms}
\vligne{\bs{p}_{+}^{(b)} & \bs{p}_{-}^{(0)}} & = c \vligne{\bs{\nu}_{+} &
        \bs{\nu}_{-}} G^{(b)} \vligne{-T_{++}^{-1} & 0 \\ 0 & -T_{--}^{-1}},
\end{align}
	where the vector $\bs\nu = \vligne{\bs{\nu}_{+} & \bs{\nu}_{-}}$
is the stationary probability vector of the matrix 
\begin{equation}
   \label{e:H}
H = G^{(b)} \vligne{-T_{++}^{-1} & 0 \\ 0 & -T_{--}^{-1}}
\vligne{0 & T_{+-} \\ T_{-+} & 0}.
\end{equation}
and the scalar $c$ is the normalizing constant defined by
\begin{align}
	\label{eqn:pms-2b} 
\vligne{\bs{p}_{+}^{(b)} & \bs{p}_{-}^{(0)}}\bs{1} + \int_0^b
\vligne{\bs{\pi}_{+}^{(b)}(x) & \bs{\pi}_{-}^{(b)}(x)}\bs{1} \ud x & = 1.
\end{align} 
\end{cor}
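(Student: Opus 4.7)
My plan is to derive the alternative expressions~\eqref{e:piepsilon} and~\eqref{e:pms} from Theorem~\ref{theo:finitefluid} by isolating, inside the boundary equation~\eqref{eqn:pms-1}, the natural probability vector that controls both the masses and the density.

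The key observation is that the boundary masses $\vligne{\bs p_+^{(b)} & \bs p_-^{(0)}}$ always enter the formulas of Theorem~\ref{theo:finitefluid} multiplied on the right by $\vligne{0 & T_{+-} \\ T_{-+} & 0}$. I therefore introduce
\[
c\,\bs\nu \;:=\; \vligne{\bs p_+^{(b)} & \bs p_-^{(0)}}\vligne{0 & T_{+-} \\ T_{-+} & 0},
\]
with $c$ chosen so that $\bs\nu\bs 1 = 1$. Expanding the block structure of $W_\varepsilon$ inside~\eqref{eqn:pms-1} and right-multiplying by $\diag(-T_{++}^{-1},-T_{--}^{-1})$, which is well defined since irreducibility of $T_\lambda$ renders the principal blocks $T_{++}$ and $T_{--}$ non-singular, rearranges at once into~\eqref{e:pms}. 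Substituting this same relation into the definition~\eqref{e:y} of $\vect y$ gives $\vect y = c\,\bs\nu\, N^{-1}$, and inserting this into~\eqref{e:oldpi} produces~\eqref{e:piepsilon}.

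To show that the auxiliary vector $\bs\nu$ is indeed the stationary probability vector of $H$, I substitute the new form~\eqref{e:pms} back into the defining identity of $c\,\bs\nu$:
\[
c\,\bs\nu \;=\; c\,\bs\nu\, G^{(b)}\vligne{-T_{++}^{-1} & 0 \\ 0 & -T_{--}^{-1}}\vligne{0 & T_{+-} \\ T_{-+} & 0} \;=\; c\,\bs\nu\, H.
\]
Hence $\bs\nu H = \bs\nu$, and $c$ is uniquely pinned down by the normalization~\eqref{eqn:pms-2b}. The practical payoff of this reformulation is that $\bs\nu$ can be read off from the standard stationary equation of a stochastic matrix rather than from the implicit linear system~\eqref{eqn:pms-1}.

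The step that deserves the most care is verifying that $H$ is stochastic and irreducible, so that the equation $\bs\nu H = \bs\nu$ characterises $\bs\nu$ uniquely as a probability vector. The identity $T_\lambda\bs 1 = 0$ gives $-T_{++}^{-1}T_{+-}\bs 1 = \bs 1$ and $-T_{--}^{-1}T_{-+}\bs 1 = \bs 1$, so the two right-most factors of $H$ combine into a stochastic matrix. Stochasticity of $G^{(b)}$ follows from its probabilistic meaning: the blocks $\Lambda^{(b)}, \Psi^{(b)}, \widetilde\Psi^{(b)}, \widetilde\Lambda^{(b)}$ appearing in~\eqref{eqn:probmats} exhaust the mutually exclusive outcomes of the embedded chain that records the phase at successive visits of $\widetilde L_\lambda(t)$ to the boundaries $\{0,b\}$, and recurrence of this chain is automatic since the buffer is finite. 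Irreducibility of $T_\lambda$ transfers to the embedded chain and hence to $H$, completing the characterisation of $\bs\nu$.
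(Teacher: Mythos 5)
Your argument is correct, and it rests on exactly the same algebraic identity as the paper's proof, but run in the opposite direction. The paper takes $\vect\nu$ to be the stationary probability vector of $H$ as given, verifies that $c\,\vect\nu G^{(b)}\diag(-T_{++}^{-1},-T_{--}^{-1})$ annihilates $W_\varepsilon$, and concludes \eqref{e:pms} ``up to a scaling constant'' --- which tacitly relies on the solution of the boundary system \eqref{eqn:pms-1}--\eqref{eqn:pms-2} being unique. You instead start from the actual boundary masses, set $c\,\bs\nu := \vligne{\bs{p}_{+}^{(b)} & \bs{p}_{-}^{(0)}}\vligne{0 & T_{+-}\\ T_{-+} & 0}$, read \eqref{e:pms} directly off the block form of \eqref{eqn:pms-1}, and recover $\bs\nu H=\bs\nu$ by substituting back; the identification $\vect y = c\,\bs\nu H N^{-1}=c\,\bs\nu N^{-1}$ then yields \eqref{e:piepsilon} exactly as in the paper. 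Your direction avoids any appeal to uniqueness for the linear system \eqref{eqn:pms-1}; what it needs instead --- and what the corollary's statement implicitly requires in either reading, since it speaks of \emph{the} stationary vector of $H$ --- is that $H$ admits a unique stationary probability vector. Your closing paragraph supplies precisely that ($H$ is stochastic because $G^{(b)}$ is and because $-T_{++}^{-1}T_{+-}\bs{1}=\bs{1}$, $-T_{--}^{-1}T_{-+}\bs{1}=\bs{1}$, and it inherits irreducibility from the phase process), a point the paper leaves implicit until Lemma~\ref{t:nu}. The only loose end is the remark that $c\neq 0$, i.e.\ that $\vligne{\bs{p}_{+}^{(b)} & \bs{p}_{-}^{(0)}}$ is not the zero vector for finite $\lambda$; this is immediate from the finite-buffer dynamics but deserves a word since your $\bs\nu$ is defined by normalizing it.
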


\begin{proof}
The proof is in two steps.  Firstly, we show that the right-hand side
of (\ref{e:pms}) is a solution of the system (\ref{eqn:pms-1}).
Indeed,
\begin{align*}
\vect\nu G^{(b)} \vligne{-T_{++}^{-1} & 0 \\ 0 & -T_{--}^{-1}}  W & = 
 -\vect\nu G^{(b)} + \vect\nu G^{(b)} \vligne{-T_{++}^{-1} & 0 \\ 0 &
   -T_{--}^{-1}}   \vligne{0 & T_{+-} \\ T_{-+}  & 0}  G^{(b)}\\
 & = - \vect\nu G^{(b)}  + \vect\nu H G^{(b)}  \\
 & = 0
\end{align*}%
by definition of $\vect\nu$.   This proves (\ref{e:pms}), where $c$ is
some scaling constant.

Secondly, {the vector $\vect y$ defined in (\ref{e:y}) may be
  written as}   
\begin{align*}
  \vect y 
 & = c \vligne{\bs{\nu}_{+} &
        \bs{\nu}_{-}} G^{(b)} \vligne{-T_{++}^{-1} & 0 \\ 0 &
        -T_{--}^{-1}} 
	\vligne{	0 & T_{+-} \\     T_{-+} & 0 	}
N^{-1}
 \\
& = c \vect \nu H N^{-1} \\
& = c \vect \nu N^{-1},
\end{align*}
which proves (\ref{e:piepsilon}).
\end{proof} 

To prove Theorem~\ref{theo:sdmmbm} below, we analyse in a succession
of lemmas the behaviour of the factors in (\ref{e:piepsilon}) as
functions of $\epsi$.

\begin{lem}
   \label{t:calg}
   The matrices $K_\epsi$, $K_\epsi^*$, $U_\epsi$, $U^*_\epsi$ and the
   inverse of $N$ are such that  

\begin{align}
   \label{eqn:K}
K_{\varepsilon} & = K_0 + O(\varepsilon)   \qquad \mbox{with \ }K_0 =
\Psi_1 \Theta^{-1} + 2V^{-1}D, \\
   \label{eqn:Ks}
K^*_{\varepsilon} & = K_0^* + O(\varepsilon) \qquad \mbox{with \ }K_0^* =
\Psi_1^* \Theta^{-1} - 2V^{-1}D, \\[0.25\baselineskip]
	\label{eqn:U}
U_{\varepsilon} & = \Theta^{-1}\Psi_1 + \varepsilon(\Theta^{-1}Q + V^{-1}D\Psi_1) + O(\varepsilon^2), \\
	\label{eqn:Us}
U^*_{\varepsilon} & = \Theta^{-1}\Psi_1^* + \varepsilon(\Theta^{-1}Q -
V^{-1}D\Psi_1) + O(\varepsilon^2)   
\end{align}
and
\begin{equation}
  \label{eqn:lastpie-2}
  {N}^{-1} = \vligne{
    I & \ue^{K_0 b} \\
    \ue^{K_0^* b} & I 
  }^{-1}  + O(\epsi).
\end{equation}
\end{lem}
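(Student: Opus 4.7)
The plan is to derive each of the four expansions by plugging in the Neumann-series expansions of $C_+^{-1}$ and $|C_-|^{-1}$ together with the expansions of $\Psi_\varepsilon$ and $\Psi_\varepsilon^*$ from Lemma~\ref{t:psi}, then collecting powers of $\varepsilon$. From $C_+ = D + \Theta/\varepsilon$ and $|C_-| = \Theta/\varepsilon - D$, the Neumann series give
\[
C_+^{-1} = \varepsilon\Theta^{-1} - \varepsilon^2 V^{-1}D + O(\varepsilon^3), \qquad |C_-|^{-1} = \varepsilon\Theta^{-1} + \varepsilon^2 V^{-1}D + O(\varepsilon^3),
\]
where the commutativity of the diagonal matrices $D$ and $\Theta$ is used to identify $\Theta^{-2}D$ with $V^{-1}D$ and will be used throughout. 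Combined with $T_{++} = T_{--} = Q - I/\varepsilon^2$ and $T_{+-} = T_{-+} = I/\varepsilon^2$, these supply all the ingredients.

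For $K_\varepsilon$ and $K_\varepsilon^*$ only the $O(1)$ limit is required. Expanding $C_+^{-1}T_{++} = -\Theta^{-1}/\varepsilon + V^{-1}D + O(\varepsilon)$ and $|C_-|^{-1}T_{-+} = \Theta^{-1}/\varepsilon + V^{-1}D + O(\varepsilon)$, the $1/\varepsilon$ singularities cancel in $K_\varepsilon = C_+^{-1}T_{++} + \Psi_\varepsilon|C_-|^{-1}T_{-+}$ because $\Psi_\varepsilon \to I$, and the $O(1)$ contributions combine to give $K_0 = 2V^{-1}D + \Psi_1\Theta^{-1}$, which is (\ref{eqn:K}). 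The derivation of (\ref{eqn:Ks}) is symmetric, with the opposite sign on the $V^{-1}D$ term reflecting the opposite sign of the $\varepsilon^2$-coefficients of $C_+^{-1}$ and $|C_-|^{-1}$.

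For $U_\varepsilon$ the key manipulation is
\[
T_{--} + T_{-+}\Psi_\varepsilon = Q + \varepsilon^{-2}(\Psi_\varepsilon - I),
\]
which by Lemma~\ref{t:psi} equals $\Psi_1/\varepsilon + O(1)$. Multiplying by $|C_-|^{-1} = \varepsilon\Theta^{-1} + \varepsilon^2 V^{-1}D + O(\varepsilon^3)$ and collecting terms through order $\varepsilon$ gives (\ref{eqn:U}); an analogous computation on $U_\varepsilon^* = C_+^{-1}T_{++} + C_+^{-1}T_{+-}\Psi_\varepsilon^*$ yields (\ref{eqn:Us}). The main technical delicate point sits here: the factor $\varepsilon^{-2}$ in $T_{-+}$ amplifies the $O(\varepsilon^2)$ remainder of $\Psi_\varepsilon$ to $O(1)$, so a priori the $\varepsilon$-coefficient of $U_\varepsilon$ could depend on the second-order term of $\Psi_\varepsilon$. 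I would dispatch this either by refining the expansion of $\Psi_\varepsilon$ using the Riccati equation at one further order, or by verifying directly that the second-order contribution from $\Psi_\varepsilon$ cancels against a matching contribution coming from the $\varepsilon^2$-coefficient of $|C_-|^{-1}$.

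Finally, (\ref{eqn:lastpie-2}) follows by continuity. The expansions just obtained give $\ue^{K_\varepsilon b} = \ue^{K_0 b} + O(\varepsilon)$ and $\ue^{K_\varepsilon^* b} = \ue^{K_0^* b} + O(\varepsilon)$, and combined with $\Psi_\varepsilon, \Psi_\varepsilon^* = I + O(\varepsilon)$ this yields $N = N_0 + O(\varepsilon)$, where $N_0$ is the matrix appearing on the right-hand side of (\ref{eqn:lastpie-2}). Continuity of matrix inversion at an invertible matrix then gives $N^{-1} = N_0^{-1} + O(\varepsilon)$, provided $N_0$ is non-singular; verifying this invertibility, which amounts to ensuring $\ue^{K_0^* b}\ue^{K_0 b}$ does not have $1$ as an eigenvalue, is the remaining point and should follow from the spectral structure of $K_0, K_0^*$ together with $b > 0$.
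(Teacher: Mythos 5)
Your derivation follows the same route as the paper's, which for this lemma is essentially a citation: the paper takes (\ref{eqn:K})--(\ref{eqn:Ks}) from Lemma~3.6 of \cite{ln13}, asserts that ``a similar proof'' gives (\ref{eqn:U})--(\ref{eqn:Us}), and observes that (\ref{eqn:lastpie-2}) follows; the cited proof is exactly the term-by-term $\epsi$-expansion you carry out. Your computations for $K_\epsi$, $K^*_\epsi$ and for $N^{-1}$ are correct. For $N^{-1}$, invertibility of the limit matrix reduces by Schur complement to invertibility of $I - \ue^{K_0 b}\ue^{K_0^* b}$; the paper does not address this explicitly either, but it follows from the same kind of spectral/substochasticity argument it uses for $I - \ue^{\Theta^{-1}\Psi_1 b}\ue^{\Theta^{-1}\Psi_1^* b}$ in the proof of Lemma~\ref{lem:quadmats}, so flagging it as a remaining point is fair.

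The delicate point you identify for $U_\epsi$ is real, and you should note that only the first of your two remedies can work. Writing $\Psi_\epsi = I + \epsi\Psi_1 + \epsi^2\Psi_2 + O(\epsi^3)$ and using $U_\epsi = |C_-|^{-1}\bigl(Q + \epsi^{-2}(\Psi_\epsi - I)\bigr)$, one finds
\begin{equation*}
U_\epsi = \Theta^{-1}\Psi_1 + \epsi\bigl(\Theta^{-1}Q + V^{-1}D\Psi_1 + \Theta^{-1}\Psi_2\bigr) + O(\epsi^2),
\end{equation*}
so the $\epsi^2$-coefficient of $|C_-|^{-1}$ is entirely consumed in producing the $V^{-1}D\Psi_1$ term already present in (\ref{eqn:U}); there is nothing left for $\Theta^{-1}\Psi_2$ to cancel against, and your second remedy fails. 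The term $\Theta^{-1}\Psi_2$ is genuinely there, so (\ref{eqn:U}) as stated holds only if one shows that $\Psi_2$ vanishes (or absorbs it into the coefficient). Moreover, substituting the second-order expansion into the Riccati equation, the $\Psi_2$-contributions cancel identically at order $\epsi$ and that order merely reproduces the quadratic equation $\frac{1}{2}VX^2 + DX + Q = 0$ for $X = \Theta^{-1}\Psi_1$; determining $\Psi_2$ therefore requires the order-$\epsi^2$ equation, one order further than your remedy (i) indicates. That said, nothing downstream uses the $\epsi$-coefficients of $U_\epsi$ and $U^*_\epsi$: the proof of Lemma~\ref{lem:quadmats} only needs $U_\epsi = \Theta^{-1}\Psi_1 + O(\epsi)$ and $U^*_\epsi = \Theta^{-1}\Psi_1^* + O(\epsi)$, which your argument does establish. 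The gap is thus confined to the precise form of the first-order coefficients claimed in (\ref{eqn:U})--(\ref{eqn:Us}), not to anything the rest of the paper relies on.
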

\begin{proof}
The expressions for $K_\epsi$ and $K^*_\epsi$ are from
\cite[Lemma~3.6]{ln13} and a similar proof gives the expressions for
$U_\epsi$ and $U^*_\epsi$.   Equation (\ref{eqn:lastpie-2}) follows
from (\ref{eqn:Psie}, \ref{eqn:Psies}, \ref{eqn:K}, \ref{eqn:Ks}).
\end{proof}


The matrices $\Lambda_{++}^{(b)}, \widetilde{\Lambda}_{--}^{(b)}, \Psi_{+-}^{(b)}, \widetilde{\Psi}_{-+}^{(b)}$ all have probabilistic interpretations. We write $\P_{(x,k,i)}[\cdot]$ as shorthand for $\P[\cdot | L(0) = x, \beta_{\lambda}(0) = k, \varphi_{\lambda}(0) = i]$ for $k \in \{1,2\}$ and $i \in \mathcal{M}$, and let $\tau_x = \inf \{t > 0: L(t) = x\}$ be the hitting time to level $x$, for $x \in [0,b]$. Then, for $i, j \in \mathcal{M}$ 
\begin{align*}
[\Lambda_{++}^{(b)}]_{(1,i),(1,j)} & = \P_{(0,1,i)}[\tau_b < \infty, \tau_b < \tau_0, \beta_{\lambda}(\tau_b) = 1, \varphi_{\lambda}(\tau_b) = j], \\
[\widetilde{\Lambda}_{--}^{(b)}]_{(2,i),(2,j)}  & = \P_{(b,2,i)}[\tau_0 < \infty, \tau_0 < \tau_b, \beta_{\lambda}(\tau_0) = 2, \varphi_{\lambda}(\tau_0) = j], \\
[\Psi_{+-}^{(b)}]_{(1,i),(2,j)} & =  \P_{(0,1,i)}[\tau_0 < \infty, \tau_0 < \tau_b, \beta_{\lambda}(\tau_0) = 2, \varphi_{\lambda}(\tau_0) = j], 
\\
[\widetilde{\Psi}_{-+}^{(b)}]_{(2,i),(1,j)} & =  \P_{(b,2,i)}[\tau_b < \infty, \tau_b < \tau_0, \beta_{\lambda}(\tau_b) = 1, \varphi_{\lambda}(\tau_b) = j].
\end{align*} 

\begin{lem} 
	\label{lem:quadmats}
%
The matrix $G^{(b)}$ is given by 
\begin{align}
   \label{e:Pofepsilon}
G^{(b)} & = J + \varepsilon G_1^{(b)}  + O(\varepsilon^2)\\
\nonumber
\mbox{where} \quad
J & = \vligne{0 & I \\ I & 0  },
\qquad \qquad
G_1^{(b)}  = \vligne{L_1 & P_1 \\\widetilde{P}_1 & \widetilde{L}_1  }
\\
		\label{eqn:L1}
	L_1 & = (\Psi_1 - P_1)\ue^{-\Theta^{-1}\Psi_1b}, \\
		\label{eqn:L1t}
	\widetilde{L}_1 & = (\Psi_1^* - \widetilde{P}_1)\ue^{-\Theta^{-1}\Psi_1^* b}, \\
		\label{eqn:P1} 
	P_1 & = (\Psi_1^* \ue^{\Theta^{-1}\Psi_1^*b}\ue^{\Theta^{-1}\Psi_1b} + \Psi_1)(I - \ue^{\Theta^{-1}\Psi_1^*b}\ue^{\Theta^{-1}\Psi_1b})^{-1}, \\
		\label{eqn:P1t} 
	\widetilde{P}_1 & = (\Psi_1 \ue^{\Theta^{-1}\Psi_1 b}\ue^{\Theta^{-1}\Psi_1^*b} + \Psi_1^*)(I - \ue^{\Theta^{-1}\Psi_1b}\ue^{\Theta^{-1}\Psi_1^*b})^{-1}. 
	\end{align} 
\end{lem}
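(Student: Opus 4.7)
The plan is to expand both sides of the defining system~\eqref{eqn:probmats} as power series in $\varepsilon$ and match terms order by order. By Lemma~\ref{t:psi}, $\Psi_{\varepsilon} = I + \varepsilon \Psi_1 + O(\varepsilon^2)$ and $\Psi_{\varepsilon}^* = I + \varepsilon \Psi_1^* + O(\varepsilon^2)$; by Lemma~\ref{t:calg}, $U_{\varepsilon} = \Theta^{-1}\Psi_1 + O(\varepsilon)$ and $U_{\varepsilon}^* = \Theta^{-1}\Psi_1^* + O(\varepsilon)$. Writing the system compactly as $G^{(b)} M_\varepsilon = R_\varepsilon$ and positing $G^{(b)} = G_0 + \varepsilon G_1^{(b)} + O(\varepsilon^2)$, I introduce the shorthand $X = \ue^{\Theta^{-1}\Psi_1 b}$ and $Y = \ue^{\Theta^{-1}\Psi_1^* b}$, so that $M_0 = \vligne{I & X \\ Y & I}$ and $R_0 = \vligne{Y & I \\ I & X}$.

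A direct block computation shows $R_0 = J\,M_0$ with $J = \vligne{0 & I \\ I & 0}$. Provided $M_0$ is invertible (addressed at the end), the zeroth-order equation forces $G_0 = J$, establishing the leading term in \eqref{e:Pofepsilon}. Turning to the first-order equation, $G_1^{(b)} M_0 = R_1 - J\,M_1$. The key structural observation is that both the diagonal blocks of $R_1$ and the off-diagonal blocks of $M_1$ contain the (non-commutative) Duhamel-type derivative of $\ue^{U_\varepsilon b}$ and $\ue^{U_\varepsilon^* b}$ at $\varepsilon=0$, and these complicated contributions cancel exactly in the combination $R_1 - J\,M_1$, leaving the clean right-hand side $\vligne{-\Psi_1^* Y & \Psi_1 \\ \Psi_1^* & -\Psi_1 X}$. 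Applying the block-inverse formula for $M_0^{-1}$ together with the Sylvester identity $Y(I-XY)^{-1} = (I-YX)^{-1}Y$, I then compute each of the four blocks of $G_1^{(b)} = (R_1 - J\,M_1) M_0^{-1}$ and recognise them, after elementary manipulation, as $L_1$, $P_1$, $\widetilde{P}_1$, $\widetilde{L}_1$ in formulas \eqref{eqn:L1}--\eqref{eqn:P1t}.

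The main obstacle I anticipate is establishing invertibility of $M_0$, equivalently of $I - XY$ and $I - YX$. By Lemma~\ref{t:psi}, the spectra of $\Theta^{-1}\Psi_1$ and $\Theta^{-1}\Psi_1^*$ both lie in the closed left half-plane, so $X$ and $Y$ have spectral radius at most one; under Assumption~\ref{ass:drift}, exactly one of the two matrices is strictly contractive while the other carries a single eigenvalue of modulus one. A short Jordan-form argument, or alternatively an appeal to the probabilistic interpretation of $\Lambda_{++}^{(b)}, \widetilde{\Lambda}_{--}^{(b)}, \Psi_{+-}^{(b)}, \widetilde{\Psi}_{-+}^{(b)}$ as transient taboo probabilities for the finite-buffer process, should yield that $XY$ has spectral radius strictly less than one for any finite $b>0$, hence the desired invertibility. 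The remaining work, namely collecting the block entries of $(R_1 - J M_1) M_0^{-1}$ into the compact forms \eqref{eqn:L1}--\eqref{eqn:P1t}, is routine matrix algebra but warrants care in tracking the Sylvester identity across the four blocks.
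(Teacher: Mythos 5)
Your proposal is correct and follows essentially the same route as the paper: expand the system \eqref{eqn:probmats} in powers of $\varepsilon$ using Lemma~\ref{t:psi} and the expansions of $U_\varepsilon$, $U_\varepsilon^*$, match coefficients at orders $\varepsilon^0$ and $\varepsilon^1$ (the paper writes the four block equations separately rather than in your compact form $G_1^{(b)}M_0 = R_1 - JM_1$, but the cancellation of the derivative terms of $\ue^{U_\varepsilon b}$ and $\ue^{U_\varepsilon^* b}$ that you identify is exactly what happens there), and invoke invertibility of $I - \ue^{\Theta^{-1}\Psi_1 b}\ue^{\Theta^{-1}\Psi_1^* b}$, which the paper also obtains from the fact that one of the two generators $\Theta^{-1}\Psi_1$, $\Theta^{-1}\Psi_1^*$ is transient so the product of the exponentials is strictly sub-stochastic.
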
 
\begin{proof} 
By Lemma~\ref{t:psi} and (\ref{eqn:U},  \ref{eqn:Us}),
\[
\Psi_{\varepsilon}\ue^{U_\varepsilon b} = \ue^{U_{\varepsilon}b} +
\varepsilon \Psi_1 \ue^{U_\varepsilon b} + O(\varepsilon^2) =
\ue^{\Theta^{-1} \Psi_1 b} + \varepsilon \Upsilon_1 +
O(\varepsilon^2),
\]
where $\varepsilon \Upsilon_1 \rightarrow 0$ as $\varepsilon
\rightarrow 0$, and 
\[
{\Psi}^*_{\varepsilon}\ue^{{U}^*_{\varepsilon}b}  =
\ue^{{U}^*_{\varepsilon}b} + \varepsilon
{\Psi}^*_1\ue^{U^*_{\varepsilon}b} + O(\varepsilon^2) =
\ue^{\Theta^{-1}{\Psi}^*_1 b} + \varepsilon {\Upsilon}^*_1 +
O(\varepsilon^2),
\] 
where ${\Upsilon}^*_1 \rightarrow 0$ as $\varepsilon \rightarrow 0$.
Then, we find from the system
\eqref{eqn:probmats}, that the
matrices $\Lambda_{++}^{(b)}$, $\widetilde{\Lambda}_{--}^{(b)},
\Psi_{+-}^{(b)},$ and $\widetilde{\Psi}_{-+}^{(b)}$ can be written as
\begin{align}
  \Lambda_{++}^{(b)} & = L_0 + \varepsilon L_1 + O(\varepsilon^2), \qquad \widetilde{\Lambda}_{--}^{(b)} = \widetilde{L}_0 + \varepsilon\widetilde{L}_1 + O(\varepsilon^2), \nonumber \\
  \Psi_{+-}^{(b)} & = P_0 + \varepsilon P_1 + O(\varepsilon^2), \qquad
  \widetilde{\Psi}_{-+}^{(b)} = \widetilde{P}_0 + \varepsilon
  \widetilde{P}_{1} + O(\varepsilon^2). \nonumber
\end{align} 
{This} leads to a new system of equations, the first of which is
\begin{align}
	\label{eqn:first}
        & L_0 + \varepsilon  L_1 + O(\varepsilon^2) + \{P_0 + \varepsilon P_1 + O(\varepsilon^2)\}\{\ue^{\Theta^{-1}{\Psi}^*_1b} + \varepsilon {\Upsilon}^*_1 + O(\varepsilon^2)\} \nonumber  \\
        &\quad = \ue^{U^*_{\varepsilon}b} \nonumber \\
        &\quad = \Psi^*_{\varepsilon}\ue^{U^*_{\varepsilon}b} - \{\varepsilon \Psi^*_1 + O(\varepsilon^2)\}\ue^{U_{\varepsilon}^*b} \nonumber \\
        & \quad = \ue^{\Theta^{-1}\Psi^*_{1}b}  \varepsilon \Psi_1^*\ue^{U_{\varepsilon}^*b} {+ \varepsilon \Upsilon_1^*} + O(\varepsilon^2),
\end{align} 
the second, similarly, is 
\begin{align} 
& \{\widetilde{P}_0 + \varepsilon \widetilde{P}_1 + O(\varepsilon^2)\}\{ \ue^{\Theta^{-1}\Psi_1 b} + \varepsilon \Upsilon_1 + O(\varepsilon^2)\} + \widetilde{L}_0 + \varepsilon \widetilde{L}_1 + O(\varepsilon^2) \nonumber \\
	\label{eqn:second}
& \quad = \ue^{\Theta^{-1} \Psi_1 b}  - \varepsilon \Psi_1
\ue^{U_{\varepsilon}b} +  {\varepsilon \Upsilon_1} + O(\varepsilon^2),
\end{align} 
and the third and fourth are 
\begin{align}
&\{L_0 + \varepsilon L_1 + O(\varepsilon^2)\}\{\ue^{\Theta^{-1}\Psi_1 b} + \varepsilon \Upsilon_1 + O(\varepsilon^2)\} + P_0 + \varepsilon P_1  + O(\varepsilon^2) \nonumber \\
	\label{eqn:third}
 & \quad = I + \varepsilon \Psi_1 + O(\varepsilon^2),  \\
& \widetilde{P}_0 + \varepsilon \widetilde{P}_1 + O(\varepsilon^2) + \{\widetilde{L}_0 + \varepsilon \widetilde{L}_1 + O(\varepsilon^2)\}\{\ue^{\Theta^{-1} \Psi_1^* b} + \varepsilon \widetilde{\Upsilon}_1 + O(\varepsilon^2)\}  \nonumber \\
	\label{eqn:fourth}
& \quad = I + \varepsilon \Psi_1^* + O(\varepsilon^2). 
\end{align} 
We match coefficients for $\varepsilon^0$ in both sides of \eqref{eqn:first}--\eqref{eqn:fourth} to obtain 
\begin{align}
	\label{eqn:0-1}
L_0 + P_0\ue^{\Theta^{-1} \Psi^*_1 b} & =  \ue^{\Theta^{-1} \Psi^*_1 b},  \\
	\label{eqn:0-4}
\widetilde{P}_0 \ue^{\Theta^{-1} \Psi_1 b} + \widetilde{L}_0 & = \ue^{\Theta^{-1} \Psi_1 b}, \\
	\label{eqn:0-2}
L_0\ue^{\Theta^{-1} \Psi_1 b} + P_0 & = I, \\
	\label{eqn:0-3}
\widetilde{P}_0 + \widetilde{L}_0\ue^{\Theta^{-1} \Psi_1^* b} & = I.   
\end{align}
Equations~\eqref{eqn:0-1} and~\eqref{eqn:0-2} imply that $L_0\{I - \ue^{\Theta^{-1}\Psi_1b}\ue^{\Theta^{-1}\Psi_1^*b}\} = 0$. By the proof of~\cite[Lemma~3.4]{ln13}, the matrices $\Theta^{-1}\Psi_1$ and $\Theta^{-1}\Psi_1^*$ are generators, one of which is for a transient Markov chain. Thus, $\ue^{\Theta^{-1}\Psi_1b}\ue^{\Theta^{-1}\Psi_1^*b}$ is sub-stochastic and $\{I -  \ue^{\Theta^{-1}\Psi_1b} \ue^{\Theta^{-1}\Psi_1^*b}\}$ is invertible, which give  $L_0 = 0$ and $P_0 = I$. Similarly,~\eqref{eqn:0-3} and \eqref{eqn:0-4} imply $\widetilde{L}_0 = 0$ and $\widetilde{P}_0 = I$. 

Next, we {equate} coefficients for $\varepsilon$ in both sides of \eqref{eqn:first}--\eqref{eqn:fourth} to obtain 
\begin{align}
	\label{eqn:1-1} 
L_1 + P_1 \ue^{\Theta^{-1} \Psi_1^* b}  & =  - \Psi_1^* \ue^{\Theta^{-1}\Psi_1^* b}, \\
	\label{eqn:1-4} 
\widetilde{P}_1\ue^{\Theta^{-1}\Psi_1b} + \widetilde{L}_1 & =  - \Psi_1\ue^{\Theta^{-1}\Psi_1b}, \\
	\label{eqn:1-2} 
L_1 \ue^{\Theta^{-1} \Psi_1 b} + P_1 & = \Psi_1, \\  
	\label{eqn:1-3} 
\widetilde{P}_1 + \widetilde{L}_1 \ue^{\Theta^{-1} \Psi_1^* b} & = \Psi_1^*.
\end{align} 
As $\ue^{\Theta^{-1} \Psi_1 b}$ and $\ue^{\Theta^{-1}\Psi_1^*b}$ are invertible, constraints \eqref{eqn:1-2} and \eqref{eqn:1-3} prove~\eqref{eqn:L1} and~\eqref{eqn:L1t}, respectively, whereas \eqref{eqn:P1} follow from \eqref{eqn:L1} and constraint \eqref{eqn:1-1}, and \eqref{eqn:P1t} follow from \eqref{eqn:L1t} and constraint \eqref{eqn:1-4}.
\end{proof} 

By Lemma \ref{lem:quadmats}, $\lim_{\varepsilon \rightarrow 0} G^{(b)} = J,$ which
makes sense probabilistically. As $\varepsilon \rightarrow 0$,
$G^{(b)}$ converges to the matrix of hitting and returning
probabilities in the limiting Markov-modulated Brownian motion
$\{{Y}(t), \kappa(t)\}$. Thus, for example,
$\lim_{\varepsilon \rightarrow 0} \Lambda_{++}^{(b)} = 0$ implies that
the conditional probabilities of $Y(t)$ hitting the upper boundary $b$
before returning to the initial level $0$ are zero.

\begin{lem}
   \label{t:nu}
The vector $\vect\nu$ is such that $\vect\nu = \vect\nu_0 +
O(\varepsilon)$, where $\vect\nu_0$ is the unique probability
vector, solution of the system $\vect\nu_0  G_1^{(b)} = \vzero$,
$\vect\nu_0 \vone = 1$.
\end{lem}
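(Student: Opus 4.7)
The plan is to expand $H$ in powers of $\varepsilon$, identify the leading nontrivial term, and then apply standard Markov-chain perturbation theory for a near-identity stochastic matrix. Using $\lambda = 1/\varepsilon^2$ together with $T_{++} = T_{--} = Q - \lambda I$ and $T_{+-} = T_{-+} = \lambda I$, direct computation yields
\[
\vligne{-T_{++}^{-1} & 0 \\ 0 & -T_{--}^{-1}}\vligne{0 & T_{+-} \\ T_{-+} & 0}
= \vligne{0 & (I-\varepsilon^2 Q)^{-1} \\ (I-\varepsilon^2 Q)^{-1} & 0}
= J + O(\varepsilon^2),
\]
where $J$ is the matrix defined in Lemma~\ref{lem:quadmats}. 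Combining this with $G^{(b)} = J + \varepsilon G_1^{(b)} + O(\varepsilon^2)$ and using $J^2 = I$ gives the compact expansion
\[
H = I + \varepsilon\, G_1^{(b)} J + O(\varepsilon^2).
\]

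Since $H$ is row-stochastic for every $\varepsilon > 0$, the probability vector $\vect{\nu} = \vect{\nu}(\varepsilon)$ solves $\vect{\nu}(H - I) = \vzero$. Substituting the expansion and dividing by $\varepsilon$ yields $\vect{\nu}\, G_1^{(b)} J = O(\varepsilon)$, equivalently $\vect{\nu}\, G_1^{(b)} = O(\varepsilon)$ because $J^{-1} = J$. The probability vectors $\vect{\nu}(\varepsilon)$ lie in a compact simplex, so every subsequential limit $\vect{\nu}_0$ satisfies $\vect{\nu}_0 G_1^{(b)} = \vzero$ together with $\vect{\nu}_0\vone = 1$, giving the stated limiting system.

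The main obstacle is uniqueness: without it, different subsequences of $\vect{\nu}(\varepsilon)$ could converge to different probability solutions, and the lemma's claim would fail. What must be established is that $G_1^{(b)} J$ is the generator of an irreducible continuous-time Markov chain on $\{1,2\}\times\mathcal{M}$. The zero-row-sum property is automatic: $H\vone = \vone$ together with $J\vone = \vone$ forces $G_1^{(b)}\vone = \vzero$. The sign structure and irreducibility of $G_1^{(b)} J$ must be extracted from the explicit formulas \eqref{eqn:L1}--\eqref{eqn:P1t} in Lemma~\ref{lem:quadmats}, using crucially the characterisation of $\Theta^{-1}\Psi_1$ and $-\Theta^{-1}\Psi_1^*$ as generators of Markov chains on $\mathcal{M}$ provided by Lemma~\ref{t:psi} under Assumption~\ref{ass:drift}. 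Once this irreducibility is in hand, standard perturbation theory for stationary distributions of stochastic matrices of the form $I + \varepsilon A + O(\varepsilon^2)$ upgrades the subsequential identification to the full claim $\vect{\nu} = \vect{\nu}_0 + O(\varepsilon)$, with $\vect{\nu}_0$ the unique probability solution.
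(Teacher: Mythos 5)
Your expansion of $H$ and the reduction to the system $\vect\nu_0 G_1^{(b)} = \vzero$, $\vect\nu_0\vone = 1$ match the paper's argument (the paper posits an expansion $\vect\nu = \vect\nu_0 + \varepsilon\vect\nu_1 + O(\varepsilon^2)$ and equates coefficients, where you use compactness of the simplex plus subsequential limits; both are acceptable, and yours is arguably more careful about existence of the limit). The problem is that you stop exactly at the decisive step. You correctly identify that everything hinges on $G_1^{(b)}J$ (equivalently $JG_1^{(b)} = \vligne{\widetilde P_1 & \widetilde L_1 \\ L_1 & P_1}$) being an irreducible generator, but you do not prove it: you only announce that the sign structure and irreducibility ``must be extracted'' from the explicit formulas \eqref{eqn:L1}--\eqref{eqn:P1t}. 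That is the whole content of the uniqueness claim, so as written the proof has a genuine gap, and the route you point to (reading signs off the closed-form expressions for $L_1, P_1, \widetilde L_1, \widetilde P_1$) would be laborious, since those formulas involve differences and inverses whose entrywise signs are not apparent.

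The paper closes this gap with a soft argument that never touches the explicit formulas: since $G^{(b)}$ is a stochastic matrix and $G^{(b)} = J + \varepsilon G_1^{(b)} + O(\varepsilon^2)$ with $J$ a permutation matrix, every entry of $G^{(b)}$ sitting over a zero of $J$ is nonnegative and of order $\varepsilon$, which forces $L_1 \geq 0$, $\widetilde L_1 \geq 0$ and the off-diagonal entries of $P_1$, $\widetilde P_1$ to be nonnegative; $G^{(b)}\vone = \vone$ forces $G_1^{(b)}\vone = \vzero$, hence the diagonal entries of $P_1$ and $\widetilde P_1$ are nonpositive; and irreducibility of $G^{(b)}$ yields strict negativity of those diagonal entries and irreducibility of the generator. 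You should either reproduce an argument of this kind or actually carry out the sign analysis you defer to; until then the uniqueness of $\vect\nu_0$, and with it the lemma, is not established.
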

\begin{proof}
We readily observe from the definition of the matrix $T$ that 
\[
H = G^{(b)} \vligne{0 & I + \varepsilon^2 Q + O(\varepsilon^4) \\
   I + \varepsilon^2 Q + O(\varepsilon^4) & 0} 
= I + \varepsilon G_1^{(b)} J + O(\varepsilon^2)
\]
so that $\vect\nu$ is of the form $\vect\nu = \vect\nu_0 + \varepsilon
\vect\nu_1 + O(\varepsilon^2)$.  If we equate the coefficients of
equal power of $\epsi$ on both sides of $\vect\nu = \vect\nu H$,
$\vect\nu\vone = 1$, we find that 
\[
\vect\nu_0 = \vect\nu_0, \qquad \vect\nu_1 = \vect\nu_1+\vect\nu_0
G_1^{(b)} J, \qquad \vect\nu_0 \vone = 1,
\]
or
\[
\vect\nu_0 G_1^{(b)} J = \vzero, \qquad \vect\nu_0 \vone = 1.
\]
Since $J$ is nonsingular and $J^2 = I$, we may rewrite the system above as
\begin{equation}
   \label{e:nuzero}
\vect\nu_0 G_1^{(b)} = \vect\nu_0 J J G_1^{(b)} = \vzero, \qquad
\vect\nu_0 \vone = \vect\nu_0 J \vone = 1.
\end{equation}
Now, the matrix 
\[
J G_1^{(b)} = \vligne{\widetilde P_1 & \widetilde L_1 \\ L_1 & P_1}
\] 
is an irreducible generator, as we show
below, and this entails that the system $\vect x J G_1^{(b)} = \vzero$,
$\vect x \vone = 1$ has a unique solution, so that the lemma will be proved.

Since $G^{(b)}$ is stochastic, we conclude from (\ref{e:Pofepsilon})
that $L_1$ and $\widetilde L_1$ are both nonnegative, as well as all
the off-diagonal elements of $P_1$ and of $\widetilde P_1$.  As
$G^{(b)} \vone = \vone$, $G_1^{(b)} \vone = \vzero$ and the diagonal
elements of $P_1$ and of $\widetilde P_1$ must be less than, or equal
to zero.  Finally, as $G^{(b)}$ is irreducible, all diagonal elements
of $P_1$ and $\widetilde P_1$ must be strictly negative.
\end{proof}

\begin{lem}
   \label{t:cplus}
The last factor in (\ref{e:piepsilon}) is
\begin{align} 
\left[
\begin{array}{rr} 
	C_{+}^{-1} & \Psi_{\varepsilon} |C_{-}|^{-1}\\
	 \Psi^*_{\varepsilon} C_{+}^{-1} & |C_{-}|^{-1}
	 \end{array}\right] = \epsi
       \vligne{\Theta^{-1} & \Theta^{-1} \\ \Theta^{-1} & \Theta^{-1}} + O(\epsi^2).
\end{align} 
\end{lem}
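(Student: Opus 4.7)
The plan is to expand each of the four blocks in powers of $\varepsilon$. Since $\lambda = 1/\varepsilon^2$, we have $\sqrt{\lambda}\Theta = \varepsilon^{-1}\Theta$, and from the definition of $C_\lambda$ the diagonal blocks are $C_+ = D + \varepsilon^{-1}\Theta$ and $C_- = D - \varepsilon^{-1}\Theta$. Under Assumption~\ref{ass:mmbm}, $\Theta$ is invertible, so for $\varepsilon$ small enough $C_+$ is positive and $C_-$ negative, with $|C_-| = \varepsilon^{-1}\Theta - D$.

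First I would factor out $\varepsilon^{-1}\Theta$ in each of these two blocks. Since $D$, $\Theta$ are both diagonal and therefore commute, I may write $C_+ = \varepsilon^{-1}\Theta(I + \varepsilon \Theta^{-1}D)$ and $|C_-| = \varepsilon^{-1}\Theta(I - \varepsilon \Theta^{-1}D)$. Inverting and applying the Neumann series $(I \pm \varepsilon \Theta^{-1}D)^{-1} = I \mp \varepsilon \Theta^{-1}D + O(\varepsilon^2)$ then gives
\begin{align*}
C_+^{-1} = \varepsilon\Theta^{-1} + O(\varepsilon^2),
\qquad |C_-|^{-1} = \varepsilon\Theta^{-1} + O(\varepsilon^2).
\end{align*}

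Next I would handle the off-diagonal blocks by inserting the expansions (\ref{eqn:Psie}) and (\ref{eqn:Psies}) from Lemma~\ref{t:psi}. Multiplying $\Psi_{\varepsilon} = I + \varepsilon\Psi_1 + O(\varepsilon^2)$ by $|C_-|^{-1} = \varepsilon\Theta^{-1} + O(\varepsilon^2)$ yields $\Psi_{\varepsilon}|C_-|^{-1} = \varepsilon\Theta^{-1} + O(\varepsilon^2)$; the first-order correction from $\Psi_1$ is absorbed in $O(\varepsilon^2)$ because it multiplies the factor $\varepsilon \Theta^{-1}$. The same reasoning applied to $\Psi^*_{\varepsilon}C_+^{-1}$ gives $\varepsilon\Theta^{-1} + O(\varepsilon^2)$. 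Assembling the four blocks yields the displayed identity.

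There is no genuine obstacle here: the lemma is purely a bookkeeping consequence of the expansions already established in Lemma~\ref{t:psi} together with the diagonal, and hence commuting, structure of $D$ and $\Theta$. The only mild care needed is to note that the $\Psi_1$ and $\Psi^*_1$ contributions to the off-diagonal blocks land in the $O(\varepsilon^2)$ remainder, so that all four blocks have the same leading term $\varepsilon\Theta^{-1}$.
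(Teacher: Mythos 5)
Your proposal is correct and follows essentially the same route as the paper: expand $C_+^{-1}=(\varepsilon^{-1}\Theta+D)^{-1}=\varepsilon\Theta^{-1}+O(\varepsilon^2)$ (and likewise for $|C_-|^{-1}$), then invoke the expansions (\ref{eqn:Psie}) and (\ref{eqn:Psies}) to see that the $\Psi_1$, $\Psi_1^*$ corrections fall into the $O(\varepsilon^2)$ remainder. You simply spell out the Neumann-series step that the paper leaves implicit.
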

\begin{proof}
By the definition of $C$, we have 
\[
C_+^{-1} = (1/\epsi \Theta + D)^{-1} = \epsi \Theta^{-1} + O(\epsi^2)
\]
and similarly $C_-^{-1} = -\epsi \Theta^{-1} + O(\epsi^2)$.  To
conclude the proof, we use (\ref{eqn:Psie}, \ref{eqn:Psies}).
\end{proof}

\begin{lem}
   \label{t:c}
The normalizing constant $c$ in \eqref{e:piepsilon} is of the form
\begin{equation}
   \label{e:c}
c = \epsi^{-1} c_{-1} + c_0 + O(\epsi).
\end{equation}
\end{lem}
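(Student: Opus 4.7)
The plan is to substitute the explicit formulas from Corollary \ref{theo:finitefluidalt} into the normalization equation \eqref{eqn:pms-2b} and expand each factor in powers of $\epsi$ using Lemmas \ref{t:calg}--\ref{t:cplus}. The normalization condition then takes the form $c \, L(\epsi) = 1$, where $L(\epsi)$ decomposes as the sum of a boundary-mass contribution and an interior-integral contribution, and it remains to read off the leading order of $L(\epsi)$.

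For the boundary-mass term, the key observation would be that $T_{++} = T_{--} = Q - \lambda I$ with $\lambda = \epsi^{-2}$, so that $-T_{++}^{-1} \vone = -T_{--}^{-1} \vone = (\lambda I - Q)^{-1} \vone = \epsi^2 \vone$, the last equality being exact because $Q \vone = \vzero$. Combined with the facts that $G^{(b)}$ is stochastic and $\vect\nu$ is a probability vector (Lemma \ref{t:nu}), identity \eqref{e:pms} collapses to
\[
\vligne{\bs{p}_+^{(b)} & \bs{p}_-^{(0)}} \vone = c \, \epsi^2 \, \vect\nu G^{(b)} \vone = c \, \epsi^2.
\]
For the interior-integral term, I would note that $\vect\nu = \vect\nu_0 + O(\epsi)$ (Lemma \ref{t:nu}), $N^{-1} = N_0^{-1} + O(\epsi)$ with $N_0 = \vligne{I & \ue^{K_0 b} \\ \ue^{K_0^* b} & I}$, and the matrix exponentials $\ue^{K_\epsi x}$, $\ue^{K_\epsi^*(b-x)}$ converge uniformly on $[0,b]$ to their $\epsi=0$ counterparts (Lemma \ref{t:calg}). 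By Lemma \ref{t:cplus}, the rightmost factor of \eqref{e:piepsilon}, once multiplied by $\vone$, is of order $\epsi$. Integrating over $[0,b]$ therefore yields
\[
\int_0^b \bs\pi_\epsi^{(b)}(x) \vone \, \ud x = c \bigl[ \epsi \, I_0 + O(\epsi^2) \bigr],
\]
where $I_0$ is the $\epsi$-independent scalar
\[
I_0 = \int_0^b \vect\nu_0 N_0^{-1} \vligne{\ue^{K_0 x} & 0 \\ 0 & \ue^{K_0^*(b-x)}} \vligne{\Theta^{-1} & \Theta^{-1} \\ \Theta^{-1} & \Theta^{-1}} \vone \, \ud x.
\]

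Putting the two contributions together, \eqref{eqn:pms-2b} becomes $c \bigl( \epsi I_0 + O(\epsi^2) \bigr) = 1$. Inverting this asymptotic relation gives $c = \epsi^{-1} I_0^{-1} + c_0 + O(\epsi)$, which is precisely \eqref{e:c} with $c_{-1} = I_0^{-1}$. The hard part is to verify that $I_0 \neq 0$, so that the leading order of $c$ is genuinely $\epsi^{-1}$ and the expansion has the announced form. I would establish this by positivity: $\Theta^{-1}\vone$ is strictly positive since $\Theta = \sqrt V$ is a positive diagonal matrix under Assumption \ref{ass:mmbm}, $\vect\nu_0$ is the strictly positive stationary vector of the irreducible generator $J G_1^{(b)}$ displayed in the proof of Lemma \ref{t:nu}, and the remaining matrix factors in the integrand can be shown to preserve enough positivity on $[0,b]$ for $I_0$ to be strictly positive.
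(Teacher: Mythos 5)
Your proposal is correct and follows essentially the same route as the paper: decompose the normalization identity \eqref{eqn:pms-2b} into the boundary-mass term and the interior integral, expand each in powers of $\epsi$ via Lemmas \ref{t:calg}--\ref{t:cplus} and Lemma \ref{t:nu}, and invert the resulting relation $c^{-1} = O(\epsi)$. You in fact go further than the paper, which merely asserts the orders $O(\epsi^2)$ and $O(\epsi)$ of the two terms and explicitly omits all details: your exact evaluation $-T_{++}^{-1}\vone = \epsi^2\vone$ of the boundary contribution is a nice simplification, and you correctly flag that the form \eqref{e:c} also requires the leading coefficient $I_0$ to be nonzero --- a point the paper passes over in silence --- even though your positivity argument for the factor $N_0^{-1}$ in the integrand is still only a sketch.
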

\begin{proof}
The exact expression of $c_{-1}$ is not as  important as the form of the
right-hand side in~(\ref{e:c}) and we shall omit the details in the
argument below.   By (\ref{eqn:pms-2b}),
\begin{align}
   \label{e:cinv}
c^{-1} & = \vect\nu G^{(b)} \vligne{-T_{++}^{-1} & 0 \\ 0 &
  -T_{--}^{-1}}\vone   \\
  \nonumber
& \quad + \vect\nu {N}^{-1} \int_0^b \vligne{ 
	\ue^{K_{\varepsilon}x} & 0  \\
	0 & \ue^{{K}^*_{\varepsilon}(b - x)} } \ud x
	\left[\begin{array}{rr} 
	C_{+}^{-1} & \Psi_{\varepsilon} |C_{-}|^{-1}\\
	 \Psi^*_{\varepsilon} C_{+}^{-1} & |C_{-}|^{-1}\end{array}\right]
        \vone
\end{align}
By Lemmas~\ref{t:nu}, \ref{t:cplus} and \ref{t:calg}, the first term
in (\ref{e:cinv}) is $O(\epsi^2)$ and the second is $O(\epsi)$, thus
$c^{-1}$ is $O(\epsi)$ and this justifies (\ref{e:c}).
\end{proof}

We may now bring together all our partial results.

\begin{theo}
	\label{theo:sdmmbm} 
	The stationary density of the two-sided Markov-modulated Brownian motion $\{\widetilde{Y}(t), \kappa(t)\}$ is given by 
\begin{align}
	\label{eqn:pimmbm}
\lim_{\varepsilon \rightarrow 0} \bs{\pi}^{(b)}_{\varepsilon}(x)(\bs{1}_2 \otimes I_m) 
& =
 c^*  \vect\nu_0
\vligne{
I & \ue^{K_0 b} \\
\ue^{K_0^* b} & I 
}^{-1}
\left[\begin{array}{rr}
\ue^{K_0 x}\Theta^{-1} \\
\ue^{K_0^*(b - x)}\Theta^{-1} 
\end{array}\right],
\end{align} 	
for $x \in (0, b)$, where $\vect\nu_0$ is the unique probability
vector that is solution of the system $\vect\nu_0 G_1^{(b)} = \vzero$,
$\vect\nu_0 \vone = 1$, and $c^*$ is a normalizing constant.

	The probability masses of $\{\widetilde{Y}(t), \kappa(t)\}$ at the two boundaries are zero. 
\end{theo}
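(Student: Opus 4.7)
The plan is to pass to the $\varepsilon \to 0$ limit directly in the formulas from Corollary~\ref{theo:finitefluidalt}, feeding in the asymptotic expansions already provided by Lemmas~\ref{t:calg}--\ref{t:c}. The key algebraic manipulation is to right-multiply \eqref{e:piepsilon} by $\bs 1_2 \otimes I_m$ so that the two duplicated copies of each phase are aggregated, reproducing the original state space $\mathcal M$.

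First I would analyse the rightmost factor of \eqref{e:piepsilon} after aggregation. The product with $\bs 1_2 \otimes I_m$ collapses the rate-matrix block to
\begin{equation*}
\vligne{C_+^{-1} + \Psi_{\varepsilon}|C_-|^{-1} \\ \Psi^*_{\varepsilon} C_+^{-1} + |C_-|^{-1}}.
\end{equation*}
Using $C_+^{-1} = \varepsilon \Theta^{-1}+O(\varepsilon^2)$ and $|C_-|^{-1}=\varepsilon \Theta^{-1}+O(\varepsilon^2)$ from the proof of Lemma~\ref{t:cplus}, together with $\Psi_{\varepsilon}=I+O(\varepsilon)$ and $\Psi^*_{\varepsilon}=I+O(\varepsilon)$ from \eqref{eqn:Psie}--\eqref{eqn:Psies}, each row block equals $2\varepsilon \Theta^{-1}+O(\varepsilon^2)$. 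Substituting also the expansions $\vect\nu=\vect\nu_0+O(\varepsilon)$ (Lemma~\ref{t:nu}), $N^{-1}=N_0^{-1}+O(\varepsilon)$ with $N_0 = \vligne{I & \ue^{K_0 b} \\ \ue^{K_0^* b} & I}$ from \eqref{eqn:lastpie-2}, and $\ue^{K_{\varepsilon} x}=\ue^{K_0 x}+O(\varepsilon)$, $\ue^{K^*_{\varepsilon}(b-x)}=\ue^{K_0^*(b-x)}+O(\varepsilon)$ from \eqref{eqn:K}--\eqref{eqn:Ks}, I obtain
\begin{equation*}
\bs\pi_{\varepsilon}^{(b)}(x)(\bs 1_2 \otimes I_m) = 2c\varepsilon \cdot \vect\nu_0\, N_0^{-1}\vligne{\ue^{K_0 x}\Theta^{-1}\\ \ue^{K_0^*(b-x)}\Theta^{-1}} + O(c\varepsilon^2).
\end{equation*}
Lemma~\ref{t:c} gives $c\varepsilon = c_{-1}+c_0\varepsilon+O(\varepsilon^2)$, so $2c\varepsilon \to 2c_{-1}$ and the error term vanishes, producing \eqref{eqn:pimmbm} with $c^* = 2c_{-1}$.

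For the boundary probability masses I would start from \eqref{e:pms}. The decisive point is that $T_{++}=T_{--}=Q-\varepsilon^{-2}I$, whence
\begin{equation*}
-T_{++}^{-1} = \varepsilon^2 (I - \varepsilon^2 Q)^{-1} = \varepsilon^2 I + O(\varepsilon^4),
\end{equation*}
and similarly for $-T_{--}^{-1}$. Combining this with $c = O(\varepsilon^{-1})$, $\vect\nu = O(1)$, and $G^{(b)}=J+O(\varepsilon)=O(1)$ from Lemmas~\ref{t:c}, \ref{t:nu}, \ref{lem:quadmats}, equation \eqref{e:pms} becomes $\vligne{\bs p_+^{(b)} & \bs p_-^{(0)}} = O(\varepsilon)$, so that both boundary masses vanish in the limit.

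The main obstacle is the careful bookkeeping of competing orders of $\varepsilon$: $c$ diverges like $\varepsilon^{-1}$, $T_{\pm\pm}^{-1}$ vanishes like $\varepsilon^2$, and the rate-matrix factor in \eqref{e:piepsilon} is $O(\varepsilon)$ after aggregation. The aggregation by $\bs 1_2 \otimes I_m$ is essential—before aggregation the density vector has $2m$ components corresponding to the two copies of $\mathcal M$, and one has to verify that in the limit the density is split equally between the two copies (consistent with the stationary distribution $\bs\gamma = (\tfrac12,\tfrac12)$ of $\beta_\lambda$). Once the orders are tracked correctly, everything collapses to the stated closed form, and the conclusion that no probability mass survives at the boundaries reflects the physical fact that as $\lambda\to\infty$ the fluid swap rate $\lambda$ makes the sojourn at each atom shrink on the $\varepsilon^2$ time scale.
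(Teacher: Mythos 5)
Your proposal is correct and follows exactly the route the paper intends: the published proof is the one-line statement that the theorem is ``a direct consequence of Lemmas~\ref{lem:quadmats} to~\ref{t:c}'', and your order-of-$\varepsilon$ bookkeeping (the $2\varepsilon\Theta^{-1}$ collapse of the rate-matrix factor after aggregation by $\bs{1}_2\otimes I_m$, the cancellation against $c=\varepsilon^{-1}c_{-1}+O(1)$ giving $c^{*}=2c_{-1}$, and the $O(\varepsilon^{-1})\cdot O(1)\cdot O(\varepsilon^{2})$ estimate killing the boundary masses) is precisely the omitted computation. No gaps.
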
 
\begin{proof}
This is a direct consequence of Lemmas \ref{lem:quadmats} to \ref{t:c}.
\end{proof}

\begin{rem}   \rm
Theorem \ref{theo:sdmmbm} shows that the stationary density is made up
of two components: the factor $\bs{\nu}_0$ is about the limiting
behaviour of $\{\widetilde{Y}(t), \kappa(t)\}$ at the boundaries, the
matrix product is about its limiting behaviour in the interior
$(0,b)$. This factorization implies that to modify the boundary behaviour of  $\{\widetilde{Y}(t), \kappa(t)\}$ would affect the vector $\bs{\nu}_0$ only. 
\end{rem}

\section{Comparison with existing literature} 
	\label{sec:comp} 
%
        Section~3.2 of Ivanovs~\cite{ivanovs10} shows that, under
        {assumption} of all
        variances being positive, both~\cite{ivanovs10}
        and~\cite{roger94} obtained the same stationary density of the
        two-sided Markov-modulated Brownian motion $\widetilde{Y}(t)$
        {\em conditioned} on the phase $\kappa(\cdot)$:
\begin{align}
[\bs{f}(x)]^{\top} & =  \lim_{t \rightarrow \infty}  \frac{\ud}{\ud x} \P[\widetilde{Y}(t) \leq x | \kappa(t)], \nonumber \\
                         & = -\{\ue^{x\overline{\Omega}_{+}}\overline{\Omega}_{+} + \ue^{(b - x)\overline{\Omega}_{-}}\overline{\Omega}_{-}\ue^{b\overline{\Omega}_{+}}\}(I - \ue^{b\overline{\Omega}_{-}}\ue^{b\overline{\Omega}_{+}})^{-1}\bs{1},
                         	\label{eqn:ri}
\end{align} 
where $\overline{\Omega}_{+}$ and $\overline{\Omega}_{-}$ are
{respectively} the generators of first passage times to level $x$ and {level} $-x$ in
$\{\overline{Y}(t), \overline{\kappa}(t): t \geq 0\}$, the
time-reversed version of the unbounded MMBM $\{Y(t),
\kappa(t)\}$. {Each is} a solution to
one of the two matrix quadratic equations
\begin{align}
\frac{1}{2}V X^2 \mp D X + \Delta_{1/\bs{\alpha}}Q^{\top}\Delta_{\bs{\alpha}} = 0.
\end{align} 
The proof of {Theorem~3.7} in \cite{ln13} gives a relationship between $K_0$ and $\overline{\Omega}_{+}$:
\begin{align}
\overline{\Omega}_{+}^{\top} =  \Delta_{\bs{\alpha}}\Theta K_0 \Theta^{-1}\Delta_{1/\bs{\alpha}},  \nonumber
\end{align} 
and, similarly, we also have $ \overline{\Omega}_{-}^{\top} =  \Delta_{\bs{\alpha}}\Theta K_0^* \Theta^{-1}\Delta_{1/\bs{\alpha}}$. Thus, the conditional stationary density~\eqref{eqn:ri} can be rewritten as 
\begin{align}
\bs{f}(x) & = -\bs{1}^{\top}(I - \ue^{b \overline{\Omega}_{+}^{\top}}\ue^{b\overline{\Omega}_{-}^{\top}})^{-1}\{\overline{\Omega}_{+}^{\top}\ue^{x \overline{\Omega}_{+}^{\top}} + \ue^{b\overline{\Omega}_{+}^{\top}}\overline{\Omega}_{-}^{\top}\ue^{(b - x)\overline{\Omega}_{-}^{\top}}\} \nonumber \\
%
& = -\bs{\alpha}\Theta(I - \ue^{bK_0} \ue^{bK_0^*})^{-1}(K_0 \ue^{x K_0} + \ue^{bK_0} K_0^*\ue^{(b - x)K_0^*})\Theta^{-1}\Delta_{1/\bs{\alpha}}, \nonumber
\end{align} 
and thus the joint stationary density for $\{\widetilde{Y}(t), \kappa(t)\}$ is given by 
\begin{align}
  \nonumber	
\bs{f}(x)\Delta_{\bs{\alpha}} & = -\bs{\alpha}\Theta(I - \ue^{bK_0} \ue^{bK_0^*})^{-1}(K_0 \ue^{x K_0} + \ue^{bK_0} K_0^*\ue^{(b - x)K_0^*})\Theta^{-1} \nonumber \\
& = -\bs{\alpha}\Theta(I - \ue^{b K_0}\ue^{b K_0^*})^{-1}\vligne{K_0 & \ue^{b K_0}K_0^*}\left[\begin{array}{rr}\ue^{xK_0} \Theta^{-1} \\ \ue^{(b - x)K_0^*}\Theta^{-1}\end{array}\right],
\end{align} 
which coincides with our \eqref{eqn:pimmbm}
{if} %
\begin{align} 
  \nonumber   
-\bs{\alpha}\Theta\vligne{I - \ue^{bK_0}\ue^{bK_0^*}}^{-1}\vligne{K_0
  & \ue^{bK_0}K_0^*} = c^* \vect\nu_0 \vligne{I & \ue^{K_0 b} \\ \ue^{K_0^*b} & I}^{-1}.
\end{align} 
{This is} shown through tedious algebraic manipulations. 

\bibliographystyle{abbrv}
\bibliography{two-sided}

\end{document}